\newtheorem{theorem}{Theorem}[section]
\newtheorem{lemma}[theorem]{Lemma}
\newtheorem{corollary}[theorem]{Corollary}
\newtheorem{proposition}[theorem]{Proposition}
\numberwithin{equation}{section}
\theoremstyle{definition}
\newtheorem{definition}[theorem]{Definition}
\newtheorem{example}[theorem]{Example}
\newtheorem{remark}[theorem]{Remark}
\newcommand{\subjclass}[2][1991]{%
  \let\@oldtitle\@title%
  \gdef\@title{\@oldtitle\footnotetext{#1 \emph{Mathematics subject classification.} #2}}%
}
\def\leq{\leqslant }
\def\geq{\geqslant}
\begin{document}

\title{Sierpi\'nski  fractals and the  dimension of their Laplacian spectrum}

\author{Mark Pollicott and Julia Slipantschuk\footnote{The authors were partly supported by ERC-Advanced Grant 833802-Resonances.}}

\subjclass[2020]{28A80, 37F35, 37C30, 65D05}


\newcommand{\Addresses}{{
  \bigskip
  \footnotesize

  M.~Pollicott, \text{Department of Mathematics, University of Warwick, Coventry, CV4 7AL, UK.}\par\nopagebreak
  \textit{E-mail}: \texttt{masdbl@warwick.ac.uk}

  \medskip

  J.~Slipantschuk, \text{Department of Mathematics, University of Warwick, Coventry, CV4 7AL, UK.}\par\nopagebreak
  \textit{E-mail}: \texttt{julia.slipantschuk@warwick.ac.uk}
}}

\maketitle

\center{Dedicated to K\'aroly Simon on the occasion of his 60+1st birthday}

\abstract{We estabish rigorous estimates for the Hausdorff dimension of the spectra
of Laplacians associated to Sierpi\'nski lattices and infinite Sierpi\'nski gaskets and other
post-critically finite self-similar sets.}

\section{Introduction}

The study of the Laplacian on manifolds has been a very successful area of mathematical analysis for over a century, combining ideas from topology, geometry, probability theory and harmonic analysis. A comparatively new development is the  theory of a  Laplacian for certain types of naturally occurring fractals, see \cite{strichartz-notices,sabot2,teplaev,fs,shima,kigami,rammal}, to name but a few. A particularly well-known example is the following famous set.

\begin{definition}
The \emph{Sierpi\'nski triangle}
$\mathcal T \subset \mathbb R^2$ (see Figure \ref{fig:sierpinski}(a)) is
the smallest non-empty compact set\footnote{In the literature, this set is also often referred to as the \emph{Sierpi\'nski gasket}, and denoted $SG_2$.}
such that
$\bigcup_{i=1}^3 T_i (\mathcal T) = \mathcal T$ where $T_1, T_2, T_3\colon \mathbb R^2 \to \mathbb R^2$ are the affine maps
$$
\begin{aligned}
T_1(x,y) &= \left(\frac{x}{2}, \frac{y}{2}\right) \qquad
T_2(x,y) = \left(\frac{x}{2} + \frac{1}{2}, \frac{y}{2}\right)\cr
T_3(x,y) &= \left(\frac{x}{2} + \frac{1}{4}, \frac{y}{2} + \frac{\sqrt{3}}{4}\right).\cr
\end{aligned}
$$
\end{definition}

\begin{figure}[h!]
\centerline{\includegraphics[height=3.5cm, angle=-0]{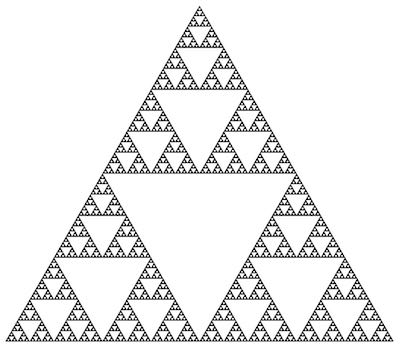}
\hskip 0.5cm
\includegraphics[height=3.5cm, angle=-0]{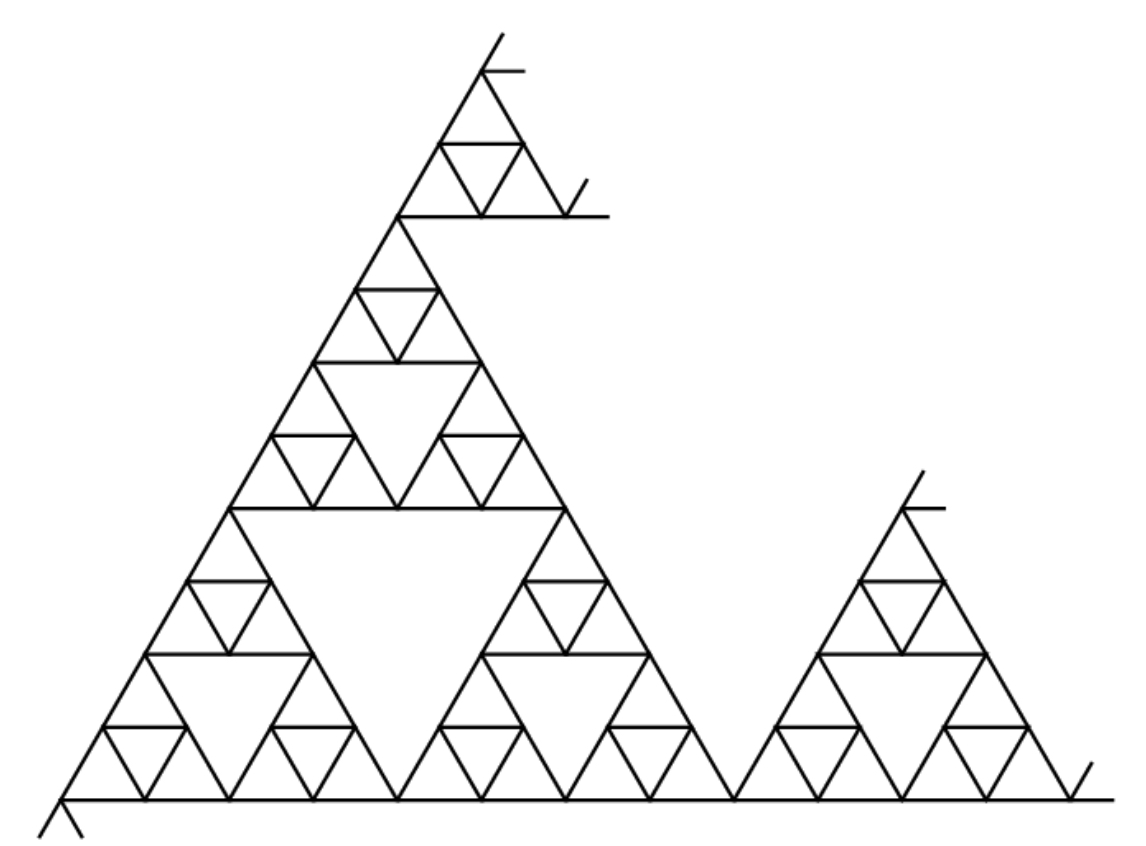}
\hskip 0.5cm
\includegraphics[height=3.5cm, angle=-0]{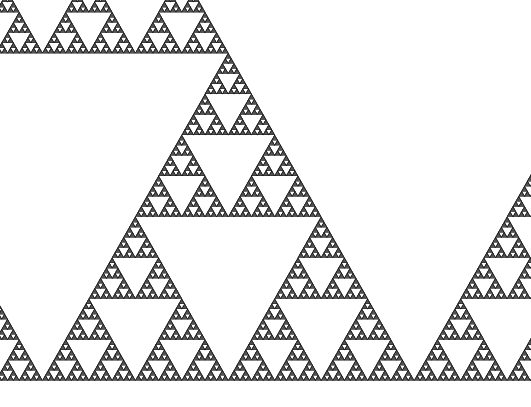}
}
\caption{(a) The standard Sierpi\'nski triangle $\mathcal T$;
(b) The Sierpi\'nski lattice ${\mathcal L}$; and
(c) The infinite Sierpi\'nski triangle $\mathcal T^\infty$.}
\label{fig:sierpinski}
\end{figure}

A second object which will play a role is the following infinite graph.

\begin{definition}
Let $V_0 = \{(0,0), (1,0), (\frac{1}{2}, \frac{\sqrt{3}}{2})\}$
be the vertices of $\mathcal T$ and define
$V_n = \bigcup_{i=1}^3 T_i(V_{n-1})$. Further, fix a sequence
$\omega = (\omega_n)_{n \in \mathbb{N}} \subset \{1, 2, 3\}^{\mathbb N}$, and let
$$V^\infty = \bigcup_{i=1}^\infty V^n \text{ with } V^n  = T_{\omega_1}^{-1} \circ \cdots \circ T_{\omega_n}^{-1} (V_n),$$
where we use the inverses
$$
\begin{aligned}
T_1^{-1}(x,y) &= \left(2x, 2y \right) \qquad
T_2^{-1}(x,y) = \left(2x -1,2y\right)\cr
T_3^{-1}(x,y) &= \left(2x-\frac{1}{2}, 2y - \frac{\sqrt{3}}{2}\right).\cr
\end{aligned}
$$
The definition of $V^\infty$ depends on the choice of $\omega$,
however as will be explained below, the relevant results
do not, allowing us to omit the dependence in our notation.
The points in $V^\infty$ correspond to the vertices of an infinite graph $\mathcal L$
called a \emph{Sierpi\'nski lattice}
 for which the  edges correspond to pairs of vertices $(v,v')$,
with $v,v' \in V^\infty$ such that  $\|v-v'\|_2=1$
(see Figure \ref{fig:sierpinski}(b)).
Equivalently, $\mathcal L$ has an edge $(v, v')$ if and only if
$$v, v' \in T_{\omega_1}^{-1} \circ\cdots \circ T_{\omega_n}^{-1} \circ
T_{i_n} \circ \cdots \circ T_{i_1}(V_0)$$
for some $i_1, \ldots, i_n \in \{1, 2, 3\}$, $n \geq 0$.
\end{definition}

\medskip
Finally, we will also be interested in
infinite Sierpi\'nski gaskets, which can be defined similarly to Sierpi\'nski lattices as follows.

\begin{definition}
For a fixed sequence $\omega = (\omega_n)_{n\in \mathbb N}$, we define
an {\it infinite Sierpi\'nski gasket} to be the unbounded set $\mathcal T^\infty$ given by
$$\mathcal T^\infty  = \bigcup_{n=0}^{\infty} \mathcal T^n, \text{ with } \mathcal T^n = T_{\omega_1}^{-1} \circ \cdots \circ T_{\omega_n}^{-1} (\mathcal T),
$$
which
is a countable union of copies of the standard Sierpi\'nski triangle $\mathcal T$
(see Figure \ref{fig:sierpinski}(c)). As for Sierpi\'nski lattices, the definition of $\mathcal T^\infty$
depends on the choice of $\omega$, but we omit this dependence in our notation as the
cited results hold independently of it.
\end{definition}

\medskip

The maps $T_1$, $T_2$ and $T_3$ are similarities on $\mathbb R^2$ with respect to the Euclidean norm,
and more precisely
$$\|T_i(x_1,y_1) - T_i(x_2,y_2)\|_2 = \frac{1}{2} \|(x_1,y_1) - (x_2,y_2)\|_2$$ for $(x_1,y_1), (x_2,y_2) \in \mathbb R^2$ and $i=1,2,3$,
 and thus by  Moran's theorem  the Hausdorff dimension of
$\mathcal T$  has the  explicit value $\dim_H(\mathcal T) = \frac{\log 3}{\log 2}$  \cite{falconer}.
We can easily give the Hausdorff dimensions of the other  spaces.
It is clear that $\dim_H(\mathcal L) =1$ and
since an infinite Sierpi\'nski gasket $\mathcal T^\infty$ consists of countably many copies of $\mathcal T$ it follows that we also have
$\dim_H(\mathcal T^\infty) = \frac{\log 3}{\log 2}$.

In this note we are concerned  with other  fractal sets  closely associated to
the infinite Sierpi\'nski gasket $\mathcal T^\infty$ and the  Sierpi\'nski  lattice $\mathcal L$, for which
 the Hausdorff dimensions are significantly more difficult to compute.

\bigskip
In \S \ref{sec:spec_laplacians} we will describe how to
associate to $\mathcal T$ a Laplacian $\Delta_{\mathcal T}$ which is a linear operator
defined on   suitable functions $f\colon \mathcal T \to \mathbb R$.
An eigenvalue $\lambda \geq  0$  for  $-\Delta_{\mathcal T}$ on the Sierpi\'nski triangle is then   a solution to the basic identity
$$\Delta_{\mathcal T} f + \lambda f =0.$$
The spectrum  $\sigma (-\Delta_{\mathcal T}) \subset \mathbb R^+$  of $-\Delta_{\mathcal T}$ is a countable set of eigenvalues.
In particular, its Hausdorff dimension
satisfies $\dim_H(\sigma(-\Delta_{\mathcal T}))=0$.
A nice account of this theory appears in the survey note of
Strichartz \cite{strichartz-notices} and his book \cite{strichartz-book}.

By contrast, in the case of the infinite Sierpi\'nski gasket and the Sierpi\'nski lattice there are associated Laplacians, denoted $\Delta_{\mathcal T^\infty}$ and $\Delta_{\mathcal L}$, respectively, with spectra $\sigma(-\Delta_{\mathcal T^\infty}) \subset \mathbb R^+$ and $\sigma(-\Delta_{\mathcal L}) \subset \mathbb R^+$ which are significantly more complicated.
In particular, their Hausdorff dimensions are non-zero and
    therefore their numerical values are of potential interest.   However, unlike the case of the
    dimensions of the original sets  $\mathcal T^\infty$ and $\mathcal L$, there is  no clear explicit form for this quantity.
    Fortunately, using thermodynamic   methods we can estimate the Hausdorff dimension\footnote{In
    this case the Hausdorff dimension equals the Box counting dimension, as will become apparent in the proof.}
 numerically to very high precision.

\begin{theorem}\label{thm:main}
The Hausdorff dimension of
 $\sigma(-\Delta_{\mathcal T^\infty})$ and $\sigma(-\Delta_{\mathcal L})$
 satisfy
$$
\dim_H(\sigma(-\Delta_{\mathcal T^\infty})) =
\dim_H\left(\sigma(-\Delta_{\mathcal L}) \right)=
0.55161856837246 \ldots
$$
\end{theorem}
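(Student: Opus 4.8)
The plan is to reduce the computation of $\dim_H(\sigma(-\Delta_{\mathcal T^\infty}))$ and $\dim_H(\sigma(-\Delta_{\mathcal L}))$ to the study of a self-similar set built from the \emph{spectral decimation} procedure for the Sierpi\'nski gasket. Recall that the eigenvalues of the graph Laplacians on the finite approximations $V_n$ are related to those on $V_{n-1}$ via a fixed rational map; concretely, if $\lambda$ is an eigenvalue at level $n-1$, then the eigenvalues at level $n$ lying above it are the preimages under a quadratic (Chebyshev-like) polynomial $R(z) = z(5-z)$, i.e. the solutions of $R(z) = \lambda$, with certain exceptional values removed. Passing to the limit, the spectrum of the infinite-volume Laplacian $-\Delta_{\mathcal T^\infty}$ (and, after the standard affine renormalisation relating the gasket Laplacian to the lattice Laplacian, also $-\Delta_{\mathcal L}$) is, up to a countable set and an affine change of coordinates, the Julia set $J(R)$ of the polynomial $R$, together with its backward orbit. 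This is the content I would quote from the work of Fukushima--Shima, Teplyaev, and Strichartz; once it is in place, the two dimensions coincide because the lattice and gasket spectra differ only by an affine map (which preserves Hausdorff dimension) and countable sets (which have dimension zero).

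With that reduction, the problem becomes: compute $\dim_H(J(R))$ for the explicit quadratic-like map $R$. The second main step is to set up the thermodynamic formalism for this. Since $R$ is a polynomial of degree $2$ and its Julia set is hyperbolic (the relevant critical point escapes, or more precisely the postcritical set stays away from $J(R)$, which one checks directly from the explicit form of $R$), Bowen's formula applies: $\dim_H(J(R))$ is the unique $s$ for which the topological pressure $P(-s \log |R'|)$ vanishes. Equivalently, by the transfer-operator/Ruelle--Perron--Frobenius approach, $\dim_H(J(R)) = s$ where the leading eigenvalue of the transfer operator $\mathcal{L}_s g(z) = \sum_{R(w)=z} |R'(w)|^{-s} g(w)$ equals $1$. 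This is where I would bring in the ``thermodynamic methods'' advertised in the introduction: the map $R$ extends to a holomorphic, indeed expanding, dynamical system on a neighbourhood of $J(R)$, so the transfer operator acts on a space of holomorphic functions and is nuclear of order zero, which is precisely the setting in which its leading eigenvalue can be computed to very high precision.

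The third and computationally decisive step is the numerical evaluation itself, and this is the part I expect to be the main obstacle to getting all the claimed digits rigorously. The strategy is to use the determinant approach: the leading eigenvalue of $\mathcal{L}_s$ is the smallest zero (in the appropriate variable) of a Fredholm determinant $\det(I - z\mathcal{L}_s)$, which for a real-analytic expanding map of the interval (or analytic map on a disc) has Taylor coefficients that decay super-exponentially — like $\theta^{n^2}$ for some $\theta < 1$ governed by the contraction ratios of the inverse branches of $R$. One approximates the determinant by the periodic-orbit expansion $\det(I - z\mathcal{L}_s) = \exp\!\bigl(-\sum_{n\geq 1} \frac{z^n}{n} \sum_{R^n(w)=w} \frac{|(R^n)'(w)|^{-s}}{1 - (R^n)'(w)^{-1}}\bigr)$ (or its interval analogue), truncates, and solves the two-variable system ``determinant $=0$ at $z=1$'' for $s$ by a Newton iteration. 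The obstacle is twofold: first, one must locate all periodic points of $R^n$ accurately — these are roots of explicit polynomials of degree $2^n$, so interval arithmetic is needed to certify them — and second, one must obtain a rigorous a priori bound on the truncation error, i.e. an explicit $\theta$ and an explicit constant so that the tail $\sum_{n > N}$ is provably smaller than $10^{-15}$. Both are standard in principle (this is the Pollicott--Jenkinson--Lyubich type machinery), but carrying the error analysis through cleanly for this specific $R$, and in particular handling the factor $1/(1 - (R^n)'(w)^{-1})$ near any near-neutral orbit, is the delicate point. Once $N \approx 10$--$15$ terms are included with certified bounds, Newton's method on the truncated determinant converges to $s = 0.55161856837246\ldots$, and the uniform estimates from the nuclearity of $\mathcal{L}_s$ guarantee that this is the true value of $\dim_H(J(R)) = \dim_H(\sigma(-\Delta_{\mathcal T^\infty})) = \dim_H(\sigma(-\Delta_{\mathcal L}))$ to the stated precision.
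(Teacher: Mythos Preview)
Your reduction is exactly what the paper does: Corollaries \ref{cor:dim_eq_lattice} and \ref{cor:eq_dim_inf_gasket} (via Teplyaev) reduce both dimensions to $\dim_H(\mathcal J_{\mathcal T})$ for the Julia set of $R_{\mathcal T}(x)=x(5-x)$, and the paper then invokes Bowen's pressure equation $P(t)=0$ for the transfer operator $\mathcal L_t$ built from the inverse branches $S_{\pm 1,\mathcal T}$. So the first two steps of your plan match the paper essentially line for line.

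Where you diverge is the certification of the numerical value. You propose the periodic-orbit/Fredholm-determinant route: expand $\det(I-z\mathcal L_s)$ via periodic points, exploit the super-exponential decay of Taylor coefficients coming from nuclearity, and bound the tail explicitly. The paper instead uses a \emph{min-max} scheme (Lemma \ref{minmax}): for candidate values $t_0<t_1$ one exhibits explicit positive test functions $f,g$ with $\mathcal L_{t_0}f/f>1$ and $\mathcal L_{t_1}g/g<1$ pointwise on $I$, which forces $t_0<\dim_H(\mathcal J_{\mathcal T})<t_1$. The test functions are produced by Lagrange--Chebyshev collocation of $\mathcal L_t$ (taking the leading eigenvector of a finite matrix), and the pointwise inequalities are then verified rigorously by ball arithmetic (Arb) with adaptive interval bisection; a bisection on $t$ refines the bounds. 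Your determinant approach is also rigorous in principle and would give the same answer, but the paper's method sidesteps the two obstacles you yourself flag---enumerating and certifying all period-$n$ points, and controlling the $1/(1-(R^n)'(w)^{-1})$ factors---by reducing the certification to checking a single explicit inequality on an interval, which is arguably simpler to make airtight. Conversely, your approach would yield faster convergence in the precision once set up, since the determinant coefficients decay like $\theta^{n^2}$.
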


A key point in our approach is that we have rigorous bounds, and
the value in the above theorem is accurate to the number of decimal places presented. We can actually estimate this Hausdorff dimension to far more decimal places. To illustrate this, in the final section we give an approximation to
$100$ decimal places.

It may not be  immediately obvious what practical information the numerical value of the Hausdorff dimension gives about the
sets $\mathcal T^\infty$ and $\mathcal L$ but it may have the potential to give an interesting numerical characteristic of the spectra.
Beyond pure fractal geometry, the spectra of Laplacians on fractals are also of practical interest, for instance
in the study of vibrations in heterogeneous and random media, or the design of so-called fractal antennas
\cite{C,HC}.

We briefly summarize the contents of this note.
In  \S \ref{sec:spec_laplacians} we describe some of the background for the Laplacian on the Sierpi\'nski graph.
 In particular, in \S \ref{sec:spec_dec} we recall the basic approach of {\it decimation} which allows
$\sigma(\Delta_{\mathcal T})$
 to be expressed in terms of a polynomial $R_{\mathcal T}(x)$.
 Although we are not directly interested in the zero-dimensional set  $\sigma(-\Delta_{\mathcal T})$, the
spectra of $\sigma(-\Delta_{\mathcal T^\infty})$ and $\sigma(-\Delta_{\mathcal L})$ actually contain  a  Cantor set $\mathcal J_{\mathcal{T}} \subset [0,5]$,
the so-called Julia set associated to the polynomial  $R_{\mathcal T}(x)$.

As one would expect, other related  constructions of fractal sets have similar spectral properties and their dimension can be
similarly studied.
In \S \ref{sec:related_results} we consider higher-dimensional Sierpi\'nski simplices, post-critically finite fractals, and
an analogous problem where we consider the spectrum of the Laplacian
on infinite graphs (e.g., the Sierpi\'nski graph and the Pascal graph).
In \S \ref{sec:dim_estimate} we recall the algorithm we used to estimate the dimension and describe its application.
This serves to both justify our estimates and also to use them as a way to illustrate a method with wider applications.

\section{Spectra of the Laplacians} \label{sec:spec_laplacians}

\subsection{Energy forms}
There are various approaches to defining the Laplacian $\Delta_{\mathcal T}$ on $\mathcal T$. We will use  one of the simplest ones, using energy forms.

Following Kigami \cite{kigami} the definition
of the spectrum of the Laplacian for the Sierpi\'nski gasket $\mathcal T$
involves a natural sequence of finite graphs $X_n$ with
$$
X_0 \subset X_1 \subset  X_2 \subset \cdots \subset \bigcup_{n} X_n
\subset
\overline{\bigcup_{n} X_n}
=:
\mathcal T,
$$
the first three of which
are  illustrated in Figure \ref{fig:sierpinski_steps}.
To this end, let
$$V_0 = \left\{ (0,0), (1,0), \left(\frac{1}{2},  \frac{\sqrt{3}}{2}\right)\right\}$$ be the three vertices of $X_0$.
The vertices of $X_n$ can  be defined iteratively to be the set of points satisfying
$$
V_n = T_1(V_{n-1}) \cup T_2(V_{n-1}) \cup T_3(V_{n-1})
\quad \hbox{ for }  n \geq 1.
$$

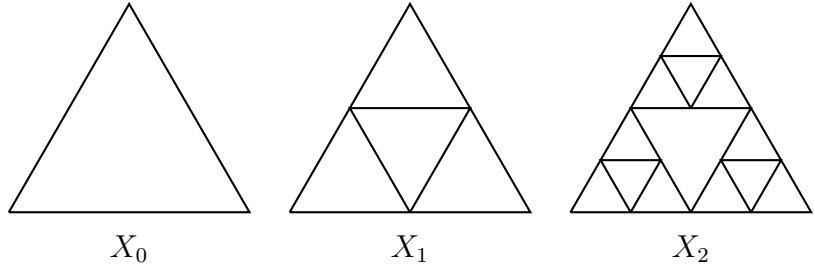
\begin{figure}
\centering
\begin{tikzpicture}[scale=0.32]
\draw[thick, black] (0,0) -- (10,0)--(5,8.66)--(0,0);
\node [below] at (5,-0.5){$X_0$};
\end{tikzpicture}
\hskip 0.4cm
\begin{tikzpicture}[scale=0.32]
\draw[thick, black] (0,0) -- (10,0)--(5,8.66)--(0,0);
\draw[thick, black] (5,0) -- (7.5,4.33)--(2.5, 4.33)--(5,0);
\node [below] at (5,-0.5){$X_1$};
\end{tikzpicture}
\hskip 0.4cm
\begin{tikzpicture}[scale=0.32]
\draw[thick, black] (0,0) -- (10,0)--(5,8.66)--(0,0);
\draw[thick, black] (5,0) -- (7.5,4.33)--(2.5, 4.33)--(5,0);
\draw[thick, black] (7.5,0) -- (8.75,2.165)--(6.25, 2.165)--(7.5,0);
\draw[thick, black] (2.5,0) -- (3.75,2.165)--(1.25, 2.165)--(2.5,0);
\draw[thick, black] (5,4.33) -- (6.25, 6.495)--(3.75, 6.495)--(5,4.33);
\node [below] at (5,-0.5){$X_2$};
\end{tikzpicture}
\caption{The first three graphs for the Sierpi\'nski triangle.}
\label{fig:sierpinski_steps}
\end{figure}

We denote by $\ell^2(V_n)$ (for $n \geq 0$) the real valued functions $f\colon V_n \to \mathbb R$ (where the $\ell^2$ notation is used for consistency with the infinite-dimensional case despite having no special significance for finite sets).

\begin{definition}
To each of the finite graphs $X_n$ ($n \geq 0$) we can associate bilinear forms
$\mathcal E_n\colon \ell^2(V_n) \times \ell^2(V_n) \to \mathbb R$ called \emph{self-similar energy forms} given by
\begin{equation}\label{eq:energyform}
\mathcal E_n(f, g) = c_n \sum_{x\sim_n y} (f(x) - f(y))(g(x) - g(y)),
\end{equation}
where $x,y \in V_n$ are vertices of $X_n$,  and $x\sim_n y$ denotes neighbouring edges in $X_n$.   In particular, $x \sim_n y$  precisely when
there exists $x',y' \in V_{n-1}$ and $i \in\{1,2,3\}$ such that $x=T_i(x')$ and $y=T_i(y')$.
 The value $c_n>0$ denotes a suitable scaling constant.
 With a slight abuse of notation, we also write $\mathcal E_n(f) := \mathcal E_n(f,f)$ for the
corresponding quadratic form $\ell^2(V_n) \to \mathbb R$.
\end{definition}            


To choose the values $c_n > 0$ (for $n \geq 0$)
we want the sequence  of bilinear forms $(\mathcal E_n)_{n=0}^\infty$
to be consistent by asking that for any $f_{n-1}\colon V_{n-1} \to \mathbb R$  (for $n \geq 1$) we have
  $$
  \mathcal E_{n-1}(f_{n-1}) = \mathcal E_n(f_n),
  $$ where  $f_n\colon V_n \to \mathbb R$
  denotes an extension which
  satisfies:
  \begin{enumerate}
  \item[(a)]
   $f_n(x) = f_{n-1}(x)$ for $x\in V_{n-1}$;  and
   \item[(b)]
  $f_n$ satisfying (a)   minimizes  $\mathcal E_n(f_n)$ (i.e., $\mathcal E_n(f_n) = \min_{f\in \ell^2(V_n)} \mathcal E_n(f)$).
   \end{enumerate}
   The following is shown  in  \cite{strichartz-book}, for example.
   \begin{lemma}
The family $(\mathcal E_n)_{n=0}^\infty$ is consistent if we choose $c_n = \left( \frac{5}{3}\right)^n$
 in \eqref{eq:energyform}.
  \end{lemma}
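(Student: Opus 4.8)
The plan is to verify consistency by a direct computation on a single cell, exploiting the self-similar structure so that the general step $\mathcal E_{n-1}(f_{n-1}) = \mathcal E_n(f_n)$ reduces to the base case $n=1$. First I would recall that $V_1$ consists of the three vertices of $X_0$ together with the three edge-midpoints $m_{12}, m_{13}, m_{23}$, and that the graph $X_1$ is the union of three small triangles $T_i(X_0)$. Given boundary data $f_0$ on $V_0$ with values, say, $a, b, c$ at the three corners, the harmonic extension $f_1$ is obtained by minimising the quadratic form $\mathcal E_1(f) = c_1 \sum_{x\sim_1 y}(f(x)-f(y))^2$ over the values at the midpoints. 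Setting the partial derivatives with respect to the three midpoint values to zero gives a linear system; by symmetry one finds the well-known ``$\frac{1}{5}$–$\frac{2}{5}$'' rule, e.g. $f_1(m_{12}) = \frac{2}{5}a + \frac{2}{5}b + \frac{1}{5}c$, and cyclically. I would then substitute these values back and compute $\mathcal E_1(f_1)$ explicitly as a quadratic form in $a, b, c$, obtaining $\mathcal E_1(f_1) = c_1 \cdot \frac{3}{5}\big((a-b)^2 + (b-c)^2 + (c-a)^2\big)$, whereas $\mathcal E_0(f_0) = c_0\big((a-b)^2+(b-c)^2+(c-a)^2\big)$. Matching these forces $c_1 = \frac{5}{3} c_0$, and normalising $c_0 = 1$ gives $c_1 = \frac{5}{3}$.

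For the inductive step, the key observation is that the minimisation defining $f_n$ from $f_{n-1}$ decouples over the cells: each cell of $X_n$ sitting inside a cell of $X_{n-1}$ is, after applying the appropriate contraction $T_i$, a scaled copy of the $X_0 \subset X_1$ picture, and the harmonic extension is computed cell-by-cell using exactly the $\frac{1}{5}$–$\frac{2}{5}$ rule on the boundary values inherited from $f_{n-1}$. Concretely, since $c_n/c_{n-1} = 5/3$, the contribution to $\mathcal E_n(f_n)$ from the edges inside one $(n-1)$-cell equals $\frac{5}{3}\cdot\frac{3}{5} = 1$ times the energy that cell carried at level $n-1$; summing over all $(n-1)$-cells yields $\mathcal E_n(f_n) = \mathcal E_{n-1}(f_{n-1})$. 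Here I would be slightly careful to note that minimising the full sum $\mathcal E_n(f_n)$ subject to $f_n|_{V_{n-1}} = f_{n-1}$ is equivalent to minimising each cell's contribution independently, because the interior vertices of distinct $(n-1)$-cells are disjoint and no edge of $X_n$ straddles two $(n-1)$-cells.

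The main obstacle — though it is really just bookkeeping — is the first computation: correctly deriving the harmonic extension coefficients at the midpoints and then expanding $\mathcal E_1(f_1)$ to extract the factor $\frac{3}{5}$. One should double-check that the three small triangles share midpoint vertices (so each midpoint is shared by exactly two cells and its defining equation involves contributions from both), which is what produces the $5$ in the denominator rather than a $4$. Once the base case is in hand, the induction is routine given the self-similarity of the $V_n$ and the multiplicativity of the scaling constants $c_n = (5/3)^n$. I would cite \cite{strichartz-book} for the statement as done in the lemma and present the above as the computation behind it.
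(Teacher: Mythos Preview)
Your proposal is correct and follows precisely the approach the paper indicates: the paper does not spell out a proof but simply states that it ``is based on solving families of simultaneous equations arising from (a) and (b)'' and refers to \cite{strichartz-book}, and your computation---deriving the $\tfrac{1}{5}$--$\tfrac{2}{5}$ harmonic extension rule by solving the linear system for the midpoint values, extracting the renormalisation factor $\tfrac{3}{5}$, and then invoking the cell-by-cell decoupling for the inductive step---is exactly that argument in detail. Your care about the decoupling (interior vertices of distinct $(n-1)$-cells being disjoint, no level-$n$ edge straddling two $(n-1)$-cells) is the right justification for reducing the general step to the base case.
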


  The proof of this lemma is based on solving families of simultaneous equations arising from (a) and (b).
  We can now define a bilinear form for functions on $\mathcal T$ using
  the consistent family of bilinear forms $(\mathcal E_n)_{n=0}^\infty$.
  \begin{definition}
   For any continuous function $f\colon \mathcal T \to \mathbb R$ we can associate the limit
   $$\mathcal E(f) := \lim_{n \to +\infty} \mathcal E_n(f) \in [0, +\infty]$$
    and let $\hbox{\rm dom}(\mathcal E) = \{f \in C(\mathcal T) : \mathcal E(f)  < +\infty\}$.
  \end{definition}
  
  \begin{remark}
  We can  consider eigenfunctions $f \in \hbox{\rm dom}(\mathcal E)$ which satisfy Dirichlet boundary conditions (i.e., $f|V_0 = 0$).
\end{remark}

 \subsection{Laplacian for \texorpdfstring{$\mathcal T$}{the Sierpi\'nski gasket}}

To define the Laplacian $\Delta_{\mathcal T}$  the last ingredient is to  consider an inner product defined
using the  natural   measure $\mu$ on the Sierpi\'nski triangle  $\mathcal T$.

    \begin{definition}
    Let $\mu$ be the natural measure on $\mathcal T$ such that
    $$
  \mu\left(  T_{i_1} \circ \cdots \circ T_{i_n} \hbox{\rm co}(V_0) \right) = \frac{1}{3^n} \quad \hbox{ for } i_1, \dots, i_n \in \{1,2,3\},
    $$
    where $\hbox{\rm co}(V_0)$ is the convex hull of $V_0$, i.e., the filled-in triangle.
    \end{definition}

    In particular, $\mu$ is the Hausdorff measure for $\mathcal T$, and the unique measure on $\mathcal T$ for which
$$T_i^*\mu = \frac{1}{3}\mu 
\quad \hbox{ for } i=1,2,3.
$$
   The subspace $\hbox{\rm dom}(\mathcal E) \subset L^2(\mathcal T, \mu)$  is a Hilbert space.
Using the measure $\mu$ and the bilinear form $\mathcal E$ we recall  the   definition of the Laplacian $\Delta_{\mathcal T}$.

\begin{definition}
   For $u\in \hbox{\rm dom}(\mathcal E)$ which vanishes on $V_0$
   we can define the Laplacian to be a continuous function
    $\Delta_{\mathcal T} u\colon \mathcal T  \to \mathbb R$ such that
    $$\mathcal E(u,v) = -\int (\Delta_{\mathcal T} u) v d\mu$$ for any
   $v \in \hbox{\rm dom}(\mathcal E)$.
\end{definition}





\begin{remark}
For each finite graph $X_n$, the spectrum $\sigma(-\Delta_{X_n})$
for the graph Laplacian $\Delta_{X_n}$ will consist of a finite number of solutions of the eigenvalue equation
\begin{equation}\label{eq:eigvec_n}
\Delta_{X_n} f + \lambda f = 0.
\end{equation}
This is easy to see because $V_n$ is finite and thus the space $\ell^2(V_n)$ is finite-dimensional
and so the graph Laplacian can be represented as a matrix.
There is then  an alternative  pointwise formulation of the Laplacian
of the form
\begin{equation}\label{eq:laplacian_lim}
\Delta_{\mathcal T} u(x) = \frac{3}{2} \lim_{n \to +\infty}
5^n \Delta_{X_n}u(x)
\end{equation}
where $x \in \bigcup_{n=1}^\infty V_n \setminus V_0$.   The eigenvalue equation
$\Delta_{\mathcal T} u +  \lambda u = 0$ then has admissible solutions  provided  $u, \Delta_{\mathcal T} u \in C(\mathcal T)$.
A result of Kigami is that  $u \in \hbox{\rm dom}(\mathcal E)$ if and only if the convergence  in \eqref{eq:laplacian_lim} is uniform \cite{kigami-1}.
\end{remark}

 \subsection{Spectral decimation for \texorpdfstring{$\sigma(-\Delta_{\mathcal T})$}{the Sierpi\'nski gasket}}
 \label{sec:spec_dec}
We begin by briefly recalling the fundamental notion of spectral decimation introduced by \cite{rammal,rato,alexander},
 which describes the spectrum $\sigma(-\Delta_{\mathcal T})$.

\begin{figure}
\centerline{
\includegraphics[height=3.5cm, angle=-0]{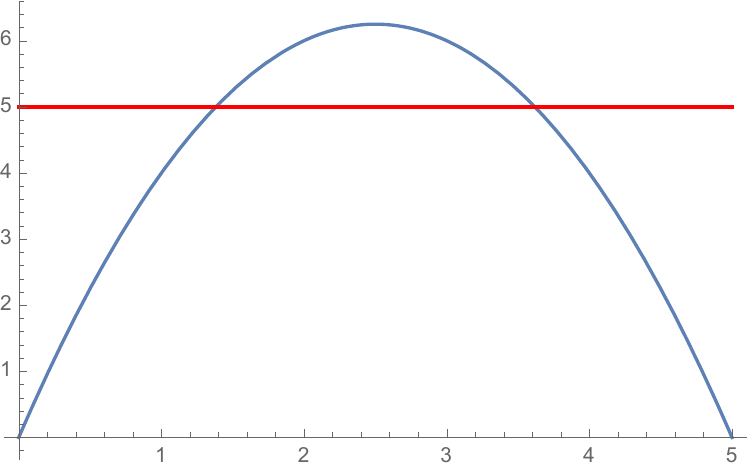}
\hskip 1cm
\includegraphics[height=3.5cm, angle=-0]{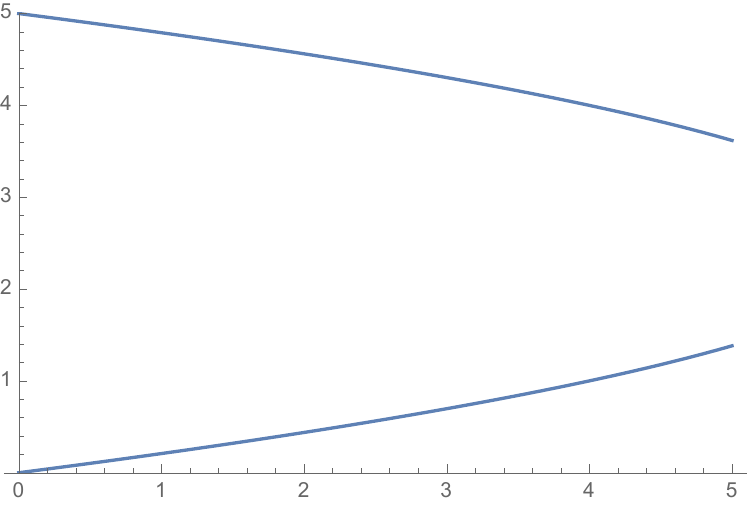}
}
\caption{The polynomial $R_{\mathcal T}(x)$ and the contracting inverse branches
$S_{-1,\mathcal T}$ and $S_{+1,\mathcal T}$ for the Sierpi\'nski triangle $\mathcal T$.}
\label{fig:rt_inv_branches}
\end{figure}

\begin{definition}
Given the polynomial $R_{\mathcal T}\colon [0,5] \to \mathbb R$
defined by
 $$R_{\mathcal T}(x) = x(5-x), $$
 we can associate local inverses (see Figure \ref{fig:rt_inv_branches})
 $S_{-1, \mathcal T}, S_{+1, \mathcal T}\colon [0,5] \to [0,5]$ of the form
\begin{equation}\label{eq:inv_branches}
S_{\epsilon, \mathcal T}(x) = \frac{5}{2} + \frac{\epsilon}{2} \sqrt{25 - 4 x}
\quad \hbox{ for } \epsilon = \pm 1.
\end{equation}
\end{definition}

The process of spectral decimation (see \cite[\S 3.2]{strichartz-book} or \cite{fs})
describes the eigenvalues of $-\Delta_{\mathcal T}$ as renormalized limits of (certain) eigenvalue sequences of $-\Delta_{X_n}, n \in \mathbb N$.
These eigenvalues, essentially, follow the recursive equality
$\lambda_{n+1} = S_{\pm1, \mathcal T}(\lambda_n)$,
while the corresponding eigenfunctions of $-\Delta_{X_{n+1}}$ are such that their
restrictions to $V_n$ are eigenfunctions for $-\Delta_{X_n}$.
Thus, the eigenvalue problem can be solved inductively, constructing solutions $f$ to
the eigenvalue equation \eqref{eq:eigvec_n} at level $n+1$ from solutions
at level $n \in \mathbb N$.
 The values of $f$ at vertices in $V_{n+1}\setminus V_n$ are
 obtained from solving the additional linear equations that arise from
 the eigenvalue equation $\Delta_{X_{n+1}}f + \lambda f = 0$, which allows for
 exactly two solutions. The exact limiting process giving rise to eigenvalues
 of $-\Delta_{\mathcal T}$ is described by the following result.

\begin{proposition}[\cite{fs,rammal,bellissard}]
Every solution $\lambda \in \mathbb{R}$ to the eigenvalue equation
\begin{equation}\label{eq:eigvec}
\Delta_{\mathcal T} u  +  \lambda u = 0
\end{equation}
can be written as
\begin{equation}\label{eq:eigvallim}
\lambda = \frac{3}{2} \lim_{m \to+\infty} 5^{m+c} \lambda_m,
\end{equation}
for a sequence $(\lambda_m)_{m \geq m_0}$ and a positive integer  $c \in \mathbb{N}_0$ satisfying
\begin{enumerate}
\item $\lambda_{m_0} = 2$ and $c = 0$, or $\lambda_{m_0} = 5$ and $c \geq 1$,
or $\lambda_{m_0} = 3$ and $c \geq 2$;
\item $\lambda_{m} = \lambda_{m+1} (5 - \lambda_{m+1}) = R_{\mathcal T}(\lambda_{m+1})$ for all $m \geq m_0$; and
\item the limit \eqref{eq:eigvallim} is finite.
\end{enumerate}
Conversely, the limit of every such sequence gives rise to a solution of \eqref{eq:eigvec}.
\end{proposition}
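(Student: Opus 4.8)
The plan is to reduce the statement to a spectral decimation identity relating the Dirichlet eigenpairs of the finite graph Laplacians $-\Delta_{X_n}$, then to transfer it to $-\Delta_{\mathcal T}$ via the pointwise formula \eqref{eq:laplacian_lim}, and finally to bookkeep which initial data and normalisation offsets can occur; complete details of each step are in \cite{fs,bellissard} and \cite[\S 3.2]{strichartz-book}, so I would only need to present the mechanism. For the algebraic step I would fix $n\geq 1$ and a Dirichlet eigenfunction $f$ of $-\Delta_{X_{n+1}}$ with eigenvalue $\mu$ (so $f|_{V_0}=0$ and $-\Delta_{X_{n+1}}f(v)=\mu f(v)$ for $v\in V_{n+1}\setminus V_0$). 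Each vertex of $V_{n+1}\setminus V_n$ is the midpoint of an edge of a unique level-$n$ cell with corners $p,q,r\in V_n$ and has exactly two old and two new neighbours, so the eigenvalue equations at the three new vertices of that cell form a $3\times 3$ linear system which, for $\mu$ outside a finite exceptional set that a short computation identifies as $\{2,5,6\}$, can be solved to give those values through $f(p),f(q),f(r)$ by a formula of the shape $f(x)=\bigl((4-\mu)(f(p)+f(q))+2f(r)\bigr)/\bigl((5-\mu)(2-\mu)\bigr)$. Substituting this back into the eigenvalue equation at an old vertex $p\in V_n\setminus V_0$ and simplifying, the $\mu$-dependent prefactors cancel and one is left with exactly $-\Delta_{X_n}\bigl(f|_{V_n}\bigr)=\mu(5-\mu)\,f|_{V_n}=R_{\mathcal T}(\mu)\,f|_{V_n}$; conversely the same formula extends any Dirichlet eigenfunction of $-\Delta_{X_n}$ with eigenvalue $R_{\mathcal T}(\mu)$ to one of $-\Delta_{X_{n+1}}$ with eigenvalue $\mu$, the two preimages of a given value under $R_{\mathcal T}$ being exactly $S_{-1,\mathcal T}$ and $S_{+1,\mathcal T}$ of it, cf.\ \eqref{eq:inv_branches}. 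This gives the recursion (2) and the role of the branches $S_{\pm 1,\mathcal T}$.

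For the transfer to $-\Delta_{\mathcal T}$ I would take a genuine solution $\Delta_{\mathcal T}u+\lambda u=0$ with $u|_{V_0}=0$ and combine \eqref{eq:laplacian_lim} with Kigami's uniform-convergence criterion \cite{kigami-1}: from $\tfrac{3}{2}\,5^n\Delta_{X_n}u(x)\to -\lambda u(x)$ and the self-similar structure one argues that for all sufficiently large $n$ the restriction $u|_{V_n}$ is a Dirichlet eigenfunction of $-\Delta_{X_n}$ with some eigenvalue $\lambda_n$, that $\lambda_n=R_{\mathcal T}(\lambda_{n+1})$ by the algebraic step, and that $\lambda=\tfrac{3}{2}\lim_n 5^n\lambda_n$. (An alternative route, which makes the ``every solution'' claim cleanest, is to run the decimation construction forward and count eigenvalue multiplicities at each finite level against $\dim\ell^2(V_n\setminus V_0)$, confirming that the decimation-produced eigenfunctions already exhaust $\sigma(-\Delta_{X_n})$ and hence $\sigma(-\Delta_{\mathcal T})$.) Relabelling $m=n-c$ with the appropriate offset then produces the normalisation \eqref{eq:eigvallim}.

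To pin down the admissible sequences I would use finiteness of the limit in (3): since $5^n\lambda_n$ stays bounded, $\lambda_n\to 0$, and near $0$ one has $S_{-1,\mathcal T}(x)=\tfrac{x}{5}+O(x^2)$ while $S_{+1,\mathcal T}(x)=5+O(x)$, so the branch $S_{+1,\mathcal T}$ can be used only finitely often and, past the last such use and the last step at which a new eigenfunction is born at an exceptional eigenvalue, the tail of $(\lambda_m)$ is generated purely by $S_{-1,\mathcal T}$, for which $5^m\lambda_m$ genuinely converges. The value $\lambda_{m_0}$ at that transition must therefore be one of the exceptional eigenvalues $2,5,6$, or a preimage $S_{\pm 1,\mathcal T}(6)\in\{2,3\}$, and matching these against the Dirichlet spectra of the small graphs $X_1,X_2$ leaves exactly the three alternatives $(\lambda_{m_0},c)=(2,0)$, $(5,c\geq1)$, $(3,c\geq2)$, the value of $c$ recording how many coarser levels the process must skip. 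For the converse I would start from a fresh Dirichlet eigenfunction at level $m_0$ with eigenvalue $\lambda_{m_0}$, extend it upward level by level by the midpoint formula with $\mu=\lambda_{m+1}$ — legitimate because no $\lambda_m$ with $m>m_0$ is exceptional, all being small — and check via Kigami's criterion that the resulting compatible tower converges uniformly to a nonzero $u\in\mathrm{dom}(\mathcal E)$ with $u|_{V_0}=0$ and $\Delta_{\mathcal T}u+\lambda u=0$, $\lambda=\tfrac{3}{2}\lim 5^{m+c}\lambda_m$.

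I expect the main obstacle to be the interface between the two pictures: proving rigorously that every $-\Delta_{\mathcal T}$-eigenfunction restricts to Dirichlet eigenfunctions of the $-\Delta_{X_n}$ — equivalently, that the decimation construction captures the whole spectrum — with the normalisations matching, together with the finite but fiddly bookkeeping of exactly which exceptional initial data occurs and with which offset $c$. By contrast the linear-algebra computation in the first step, though it is the technical heart of the mechanism, is routine once the combinatorial structure of the new vertices within each cell is set up correctly.
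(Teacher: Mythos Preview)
The paper does not give its own proof of this proposition: it is stated with citations to \cite{fs,rammal,bellissard}, preceded only by an informal paragraph describing the mechanism of spectral decimation (restricting eigenfunctions from $V_{n+1}$ to $V_n$, solving the linear system at the new vertices, and the recursion $\lambda_n=R_{\mathcal T}(\lambda_{n+1})$), and followed by the remark that finiteness of the limit is equivalent to eventually choosing the branch $S_{-1,\mathcal T}$. Your proposal expands exactly this mechanism and is the standard argument found in the cited sources and in \cite[\S 3.2]{strichartz-book}; in particular your identification of the exceptional set $\{2,5,6\}$, the cell-by-cell $3\times 3$ system, the midpoint extension formula, the passage via \eqref{eq:laplacian_lim} and Kigami's uniform-convergence criterion, and the tail analysis forcing $\epsilon_m=-1$ eventually, all match what those references do, so there is nothing to compare beyond noting that the paper defers the proof entirely.
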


We remark that equivalently, the sequence $(\lambda_m)_{m \geq m_0}$
could be described
recursively as $\lambda_{m+1} = S_{\epsilon_m,\mathcal{T}}(\lambda_m)$
where $\epsilon_m \in \{\pm 1 \}$ for $m \geq m_0$.
The finiteness of the limit \eqref{eq:eigvallim} is equivalent to
there being an $m' \geq m_0$ such that $\epsilon_m = -1$ for all $m \geq m'$.

\subsection{Spectrum of the Laplacian for Sierpi\'nski lattices}
\label{sec:spec_lattice}
For a Sierpi\'nski lattice, we define the Laplacian
$\Delta_{\mathcal L}$ 
by
$$
(\Delta_{\mathcal L}f)(x) = s_x \sum_{y\sim x} (f(y)-f(x))
$$
with
$$
s_x
=
\begin{cases}
2 &\hbox{ if $x$ is a boundary point},\cr
1 &\hbox{ if $x$ is not a boundary point},\cr
\end{cases}
$$
which is a well-defined and bounded operator from $\ell^2(V^\infty)$ to itself (this follows from the fact that each vertex of $\mathcal L$ has at most $4$ neighbours).

\begin{remark} \label{rem:seq_indep}
We note that our definition of $V^\infty$ and $\mathcal L$ depended on the choice
of a sequence $(\omega_n)_{n \in \mathbb N}$, and graphs resulting from different
sequences are typically not isometric \cite[Lemma 2.3(ii)]{teplaev}.
On the other hand, the spectrum $\sigma(-\Delta_{\mathcal L})$ turns out to be
independent of this choice (see \cite[Remark 4.2]{teplaev} or \cite[Proposition 1]{sabot2}).
\end{remark}

The operator $-\Delta_{\mathcal L}\colon \ell^2(V^\infty) \to \ell^2(V^\infty)$ has a more complicated spectrum which depends on the following definition.

\begin{definition}[cf.~\cite{falconer}] We define the Julia set associated to $R_{\mathcal T}$ to be the
smallest non-empty closed set $\mathcal J_{\mathcal T} \subset [0, 5]$ such that
$$\mathcal J_{\mathcal T} = S_{-1, \mathcal T}(\mathcal J_{\mathcal T}) \cup S_{+1, \mathcal T} (\mathcal J_{\mathcal T}).$$
\end{definition}

This leads to the following description of the spectrum  $\sigma(-\Delta_{\mathcal L})$.

\begin{proposition}[{\cite[Theorem 2]{teplaev}}]
The operator $-\Delta_{\mathcal L}$ on $\ell^2(V^\infty)$ is bounded, non-negative and self-adjoint and has
spectrum  $$\sigma(-\Delta_{\mathcal L}) = \mathcal J_{\mathcal T} \cup \left(\{6\} \cup  \bigcup_{n=0}^{\infty} R^{-n}(\{3\})\right).$$
\end{proposition}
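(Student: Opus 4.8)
The plan is to deduce everything from the spectral decimation mechanism of \S\ref{sec:spec_dec}, after first settling the abstract operator-theoretic properties. Write $-\Delta_{\mathcal L}=\operatorname{diag}(s_x)\,L$, where $(Lf)(x)=\sum_{y\sim x}(f(x)-f(y))$ is the combinatorial graph Laplacian of $\mathcal L$. Since every vertex of $\mathcal L$ has at most four neighbours, $L$, and hence $-\Delta_{\mathcal L}$, is bounded. Equip $\ell^2(V^\infty)$ with the measure $m_x=s_x^{-1}$ --- so the inner product $\langle f,g\rangle_m=\sum_x m_x f(x)\overline{g(x)}$ is equivalent to the standard one, as $s_x\in\{1,2\}$; then $\langle -\Delta_{\mathcal L}f,g\rangle_m=\langle Lf,g\rangle=\langle f,Lg\rangle=\langle f,-\Delta_{\mathcal L}g\rangle_m$, where the unsubscripted products are unweighted and $m_xs_x=1$ is used at both ends, and $\langle -\Delta_{\mathcal L}f,f\rangle_m=\tfrac12\sum_{x\sim y}|f(x)-f(y)|^2\geq 0$, so $-\Delta_{\mathcal L}$ is self-adjoint and non-negative; equivalently it is similar, via the bounded diagonal operator $\operatorname{diag}(s_x^{1/2})$, to the manifestly self-adjoint operator $\operatorname{diag}(s_x^{1/2})\,L\,\operatorname{diag}(s_x^{1/2})$. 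In particular $\sigma(-\Delta_{\mathcal L})\subset\mathbb R^+$ is compact, and since $R_{\mathcal T}(\lambda)=\lambda(5-\lambda)<0$ for $\lambda>5$, the decimation correspondence below will force its only spectral value above $5$ to be $6$.

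For the inclusion ``$\supseteq$'' I would exhibit compactly supported eigenfunctions. Putting the alternating values $+1,-1,+1,-1,+1,-1$ on the six vertices of any hexagonal cycle of $\mathcal L$ and $0$ elsewhere gives, by a one-line computation, an eigenfunction of $-\Delta_{\mathcal L}$ with eigenvalue $6$; as $\mathcal L$ contains a copy of $X_n$ for every $n$ it contains infinitely many such hexagons, so $6\in\sigma(-\Delta_{\mathcal L})$. Next, the decimation of \S\ref{sec:spec_dec} allows one to ``inflate'' a compactly supported eigenfunction with eigenvalue $\mu$, supported inside a copy of $V_{n-1}\subset\mathcal L$, to a compactly supported eigenfunction with eigenvalue $\lambda$ for any root of $R_{\mathcal T}(\lambda)=\mu$, supported inside the corresponding copy of $V_n$: the extension is determined by finitely many local linear equations and vanishes wherever the original did. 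Both roots $\lambda=S_{\pm1,\mathcal T}(\mu)$ are admissible except when $\mu$ is the coarse forbidden value $6$, in which case only the root $\lambda=3$ survives (the other root, $\lambda=2$, being the fine forbidden value). Iterating from $\mu=6$ produces compactly supported eigenfunctions for every $\lambda\in\bigcup_{n\geq0}R_{\mathcal T}^{-n}(\{3\})$, so all of these lie in $\sigma(-\Delta_{\mathcal L})$; since the spectrum is closed and $3$ is not an exceptional point of the polynomial $R_{\mathcal T}$ (its backward orbit is dense in the Julia set), $\sigma(-\Delta_{\mathcal L})$ also contains $\overline{\bigcup_{n\geq0}R_{\mathcal T}^{-n}(\{3\})}\supseteq\mathcal J_{\mathcal T}$. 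Hence $\mathcal J_{\mathcal T}\cup\{6\}\cup\bigcup_{n\geq0}R_{\mathcal T}^{-n}(\{3\})\subseteq\sigma(-\Delta_{\mathcal L})$.

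For ``$\subseteq$'' the point is that decimation, applied to the nested exhaustion $V^0\subset V^1\subset\cdots$ of $\mathcal L$, gives a spectral correspondence: there is a finite set $F$ of forbidden values so that for every $\lambda\notin\bigcup_{n\geq0}R_{\mathcal T}^{-n}(F)$ one has
$$
\lambda\in\sigma(-\Delta_{\mathcal L})\iff R_{\mathcal T}(\lambda)\in\sigma(-\Delta_{\mathcal L}).
$$
Because $\sigma(-\Delta_{\mathcal L})$ is bounded and non-negative, while any $R_{\mathcal T}$-orbit that ever exceeds $5$ tends to $-\infty$, iterating this equivalence forces such a $\lambda\in\sigma(-\Delta_{\mathcal L})$ to have $R_{\mathcal T}^n(\lambda)\in[0,5]$ for all $n$, i.e.\ to lie in $\mathcal J_{\mathcal T}$. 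It then remains to handle the finitely many seeds in $F$ and their backward orbits: $6$ and the value $3$ together with all its preimages are in the spectrum (shown above), the preimages of the forbidden value $5$ already lie in $\mathcal J_{\mathcal T}$ and need no separate treatment, and --- the crux --- the forbidden value $2$, together with its entire backward orbit, must be shown \emph{absent} from $\sigma(-\Delta_{\mathcal L})$, even though $2$ is an eigenvalue of every finite graph $X_n$.

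I expect this last step to be the main obstacle. Heuristically, the eigenfunctions realising eigenvalue $2$ (and its branches) on $X_n$ arise, through decimation, from the \emph{non-localised} eigenfunctions of eigenvalues $6$ and $0$ on coarser graphs, so they are not finitely supported and their mass concentrates near the boundary vertices of $X_n$; in the blow-up $\mathcal L$ the boundary recedes to infinity, so these functions neither converge in $\ell^2(V^\infty)$ nor furnish a Weyl sequence, whence $2\notin\sigma(-\Delta_{\mathcal L})$. Making this rigorous requires quantitative control of the decimation extension near the boundary --- equivalently a resolvent estimate on $\mathcal L$ --- together with a completeness statement: the compactly supported eigenfunctions for the eigenvalues in $\{6\}\cup\bigcup_{n\geq0}R_{\mathcal T}^{-n}(\{3\})$, and the approximate eigenfunctions attached to $\mathcal J_{\mathcal T}$, exhaust the spectral subspaces. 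This is precisely the analysis of \cite[Theorem 2]{teplaev}, which one may alternatively invoke verbatim.
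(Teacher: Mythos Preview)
The paper does not prove this proposition; it is quoted from \cite[Theorem~2]{teplaev} as a black box and used only to deduce Corollary~\ref{cor:dim_eq_lattice}, so there is no in-paper argument to compare against. Your sketch nonetheless follows Teplyaev's own strategy closely: compactly supported eigenfunctions on hexagonal cycles realise the eigenvalue $6$; decimation inflates these to realise every point of $\bigcup_{n\geq 0} R_{\mathcal T}^{-n}(\{3\})$; closure then captures $\mathcal J_{\mathcal T}$; and for the reverse inclusion the renormalisation equivalence $\lambda\in\sigma\Leftrightarrow R_{\mathcal T}(\lambda)\in\sigma$ (away from the forbidden set), combined with boundedness of the spectrum, forces every non-exceptional spectral value into $\mathcal J_{\mathcal T}$. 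You have also correctly isolated the one genuinely delicate step --- the exclusion of $2$ and its entire backward orbit from $\sigma(-\Delta_{\mathcal L})$, even though $2$ is an eigenvalue of each finite approximant $X_n$ --- and your heuristic (the corresponding finite-stage eigenfunctions are delocalised and carry their mass near the receding boundary, so furnish neither $\ell^2$-eigenfunctions nor Weyl sequences in the limit) is precisely the mechanism that \cite{teplaev} makes rigorous. Since you explicitly defer that step to the cited reference, and the present paper defers the entire statement, the proposal is sound.
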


This immediately leads to the following.

\begin{corollary}\label{cor:dim_eq_lattice}
We have that
 $\dim_H(\sigma(-\Delta_{\mathcal L})) = \dim_H(\mathcal J_{\mathcal T})$.
\end{corollary}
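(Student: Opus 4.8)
The plan is to derive the corollary directly from the spectral description
$$\sigma(-\Delta_{\mathcal L}) = \mathcal J_{\mathcal T} \cup \left(\{6\} \cup \bigcup_{n=0}^{\infty} R^{-n}(\{3\})\right)$$
given in the preceding proposition, using only elementary stability properties of Hausdorff dimension. The key observation is that the second set in this union, namely $\{6\} \cup \bigcup_{n=0}^{\infty} R^{-n}(\{3\})$, is a countable set: each preimage set $R^{-n}(\{3\})$ is finite (since $R = R_{\mathcal T}$ is a polynomial of degree $2$, so $R^{-n}(\{3\})$ has at most $2^n$ points), and a countable union of finite sets is countable. Hence this set has Hausdorff dimension $0$.

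First I would invoke countable stability of Hausdorff dimension: for a countable union $A = \bigcup_i A_i$ one has $\dim_H(A) = \sup_i \dim_H(A_i)$. Applying this to $\sigma(-\Delta_{\mathcal L}) = \mathcal J_{\mathcal T} \cup C$, where $C := \{6\} \cup \bigcup_{n=0}^{\infty} R^{-n}(\{3\})$, gives
$$\dim_H(\sigma(-\Delta_{\mathcal L})) = \max\{\dim_H(\mathcal J_{\mathcal T}), \dim_H(C)\}.$$
Since $C$ is countable, $\dim_H(C) = 0$, so the maximum equals $\dim_H(\mathcal J_{\mathcal T})$ provided that $\dim_H(\mathcal J_{\mathcal T}) \geq 0$, which is automatic. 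This yields the claimed equality $\dim_H(\sigma(-\Delta_{\mathcal L})) = \dim_H(\mathcal J_{\mathcal T})$.

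The only point requiring a small remark is that $\mathcal J_{\mathcal T}$ is non-empty and is in fact a genuine Cantor set of positive dimension (as asserted informally earlier in the introduction), so that the equality is not vacuous; but strictly speaking the corollary as stated holds regardless, since even if $\mathcal J_{\mathcal T}$ were countable both sides would equal $0$. There is essentially no obstacle here: the content of the corollary is entirely contained in the spectral decomposition supplied by Teplyaev's theorem, and the reduction is a routine application of the countable stability of Hausdorff dimension together with the degree bound $\#R^{-n}(\{3\}) \le 2^n$. The analogous statement for $\sigma(-\Delta_{\mathcal T^\infty})$ — which one needs for the full Theorem~\ref{thm:main} — would follow in exactly the same way once the corresponding spectral description is recorded.
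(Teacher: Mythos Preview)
Your argument is correct and is exactly the routine reduction the paper has in mind: the corollary is stated as an immediate consequence of Teplyaev's spectral decomposition, and your use of countable stability of Hausdorff dimension together with the finiteness of each $R^{-n}(\{3\})$ spells out precisely why.
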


Thus estimating the Hausdorff dimension of the spectrum
$\sigma(-\Delta_{\mathcal L})$
is equivalent to estimating that of the Julia set $\mathcal J_{\mathcal T}$. The following provides a related application.

\begin{figure}
\centerline{\includegraphics[height=3.5cm, angle=-0]{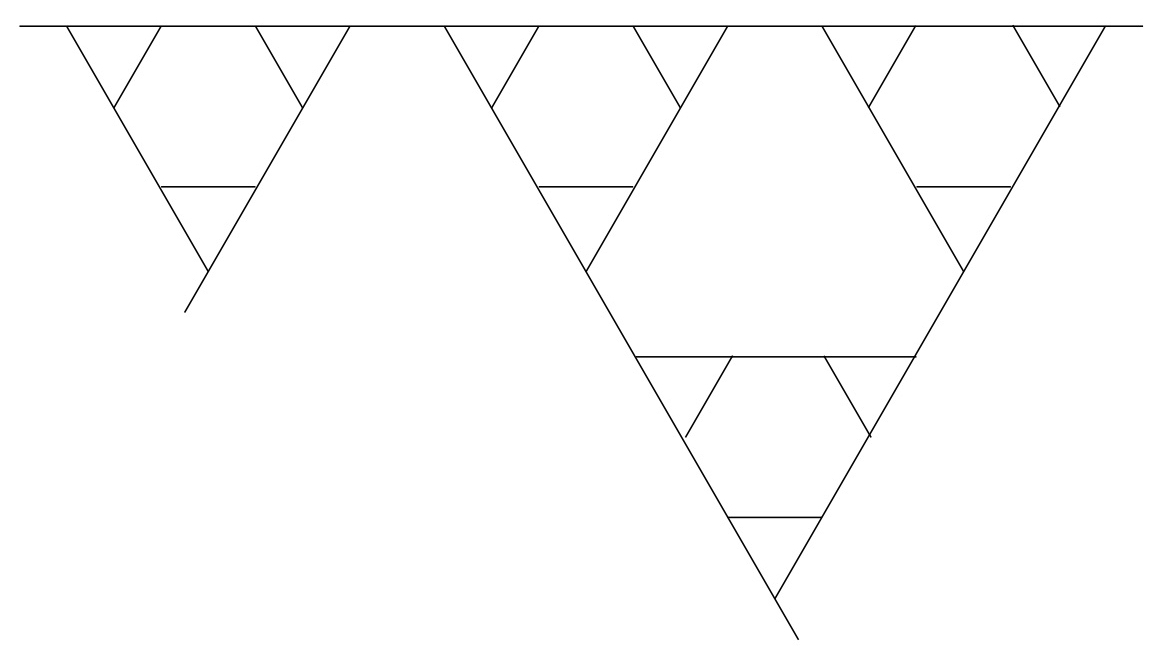}}
\caption{The Pascal graph.}
\label{fig:pascal}
\end{figure}

\begin{example}[Pascal graph]
Consider the Pascal graph $\mathcal P$ \cite{quint}, which is an infinite $3$-regular
graph, see Figure \ref{fig:pascal}.
Its edges graph is the Sierpi\'nski lattice $\mathcal L$, and
as was shown by Quint \cite{quint}, the spectrum 
$\sigma (-\Delta_{\mathcal P})$
of its Laplacian $-\Delta_{\mathcal P}$ is
the union of a countable set and the Julia set of a certain polynomial (affinely) conjugated to $R_{\mathcal T}$.
From this we deduce that
$$\dim_H(\sigma (-\Delta_{\mathcal P}))
 =  \dim_H({\mathcal J}_{\mathcal P}) =  \dim_H({\mathcal J}_{\mathcal L})
=\dim_H(\sigma (-\Delta_{\mathcal L})),$$
which we estimate in Theorem \ref{thm:main}.
\end{example}



\subsection{Spectrum of the Laplacian for infinite Sierpi\'nski gaskets}
\label{sec:spec_inf_gasket}

We finally turn to the case of an infinite Sierpi\'nski gasket $\mathcal T^\infty$.
The Laplacian $\Delta_{\mathcal T^\infty}$ is an operator with a domain in
$L^2(\mathcal T^\infty, \mu^\infty)$. Here $\mu^\infty$ is the
natural measure on $\mathcal T^\infty$, whose restriction to $\mathcal T$ equals $\mu$, and such that any two isometric sets are of equal measure (see \cite{teplaev}).

Remark \ref{rem:seq_indep} applies almost identically also to the Sierpi\'nski gasket case:
$\mathcal T^\infty$ depends non-trivially on the choice of a sequence $\omega$ in its definition,
and different sequences typically give rise to non-isometric gaskets,
with the boundary of $\mathcal T^\infty$ empty if and only if $\omega$ is eventually constant \cite[Lemma 5.1]{teplaev}.
The spectrum $\sigma(-\Delta_{\mathcal T^\infty})$, however,
is independent of $\omega$ (even if the spectral decomposition is not, see \cite[Remarks 5.4]{teplaev} or \cite[Proposition 1]{sabot2}).
Using the notation
$$
\mathcal R (z) = \lim_{n\to \infty} 5^n (S_{-1,\mathcal T})^n(z),
$$
we have the following result on the spectrum $\sigma(-\Delta_{\mathcal T^\infty})$.

\begin{proposition}[{\cite[Theorem 4]{teplaev}}]\label{prop:union_inf_gasket}
The operator $-\Delta_{\mathcal T^\infty}$  is an unbounded self-adjoint operator from a dense domain in
$L^2(\mathcal T^\infty, \mu^\infty)$ to $L^2(\mathcal T^\infty, \mu^\infty)$.
Its spectrum is $\sigma(-\Delta_{\mathcal T^\infty})= \mathcal J^\infty \cup \Sigma_3^\infty$
with
$$ \mathcal J^\infty = \bigcup_{n=-\infty}^\infty 5^n \mathcal R(\mathcal J_{\mathcal T})
\quad \hbox{ and } \quad
\Sigma_3^\infty = \bigcup_{n=-\infty}^\infty 5^n \mathcal R(\Sigma_3),$$
where $\Sigma_3 = \bigcup_{n=0}^{\infty} R^{-n}(\{3\})$.
\end{proposition}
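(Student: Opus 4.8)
The plan is to run spectral decimation on the increasing exhaustion $\mathcal T^\infty = \bigcup_n \mathcal T^n$ rather than on the single triangle $\mathcal T$, and then to pass to the limit. First I would set up $-\Delta_{\mathcal T^\infty}$ as a self-adjoint operator: one extends the energy form $\mathcal E$ to $\mathcal T^\infty$ by prescribing that it scale by $5$ under each map $T_{\omega_j}^{-1}$ (consistently with the choice $c_n = (5/3)^n$ and the pointwise formula \eqref{eq:laplacian_lim}), verifies that the resulting form on $L^2(\mathcal T^\infty, \mu^\infty)$ is closed, and defines $-\Delta_{\mathcal T^\infty}$ as the associated non-negative self-adjoint operator; unboundedness is immediate since $\mathcal T^\infty$ contains isometric copies of $\mathcal T$ at all scales $5^n$, $n\ge 0$. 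For the $\omega$-independence of the spectrum I would argue at the level of the finite approximating graphs $X^\infty_n$ of $\mathcal T^\infty$: two sequences $\omega, \omega'$ produce graphs agreeing on arbitrarily large finite pieces, which yields strong resolvent convergence of the truncated graph Laplacians and hence equality of spectra (this is the content cited from \cite{teplaev,sabot2}).

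The core step is to identify $\sigma(-\Delta_{\mathcal T^\infty})$ with the closure of the union, over all levels $n$, of the suitably rescaled spectra of the finite graph Laplacians $-\Delta_{X^\infty_n}$, using \eqref{eq:eigvec_n}. On each copy $\mathcal T^m \subset \mathcal T^\infty$ the decimation identity $\lambda_m = R_{\mathcal T}(\lambda_{m+1})$ together with the renormalization $\lambda = \tfrac32 \lim 5^{m+c}\lambda_m$ from the Proposition of \cite{fs,rammal,bellissard} transports eigenvalues of $-\Delta_{\mathcal T}$ to eigenvalues on $\mathcal T^m$; passing to the exhaustion replaces the one-sided constant $c\ge 0$ by the two-sided family of scalings $5^n$, $n\in\mathbb Z$, reflecting the scale invariance of $\mathcal T^\infty$ under the similarities $T_{\omega_j}^{-1}$ — one may renormalize inward within a fixed cell (the usual decimation) and outward along the tower $T_{\omega_1}^{-1}\circ\cdots\circ T_{\omega_n}^{-1}(\mathcal T)$. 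The map $\mathcal R(z) = \lim_n 5^n (S_{-1,\mathcal T})^n(z)$ is precisely the Koenigs linearizing coordinate at the fixed point $0$ of $R_{\mathcal T}$ (multiplier $5$), i.e.\ the limit of the inward renormalization of a spectral value $z$; thus $5^n\mathcal R(\mathcal J_{\mathcal T})$ collects the generic spectrum living at scale $5^{-n}$, while $5^n\mathcal R(\Sigma_3)$, with $\Sigma_3 = \bigcup_k R^{-k}(\{3\})$, collects the eigenvalues of finitely supported (localized) eigenfunctions — these occur exactly at the decimation-exceptional value $3$ and its preimages, and are genuine $\ell^2$-eigenfunctions since they vanish outside a single cell.

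It then remains to prove the two inclusions. For ``$\supseteq$'', each $5^n\mathcal R(\mathcal J_{\mathcal T})$ and each $5^n\mathcal R(\Sigma_3)$ is shown to lie in the spectrum by exhibiting Weyl sequences built from decimated eigenfunctions on large finite pieces, iterating via $\mathcal J_{\mathcal T} = S_{-1,\mathcal T}(\mathcal J_{\mathcal T}) \cup S_{+1,\mathcal T}(\mathcal J_{\mathcal T})$. For ``$\subseteq$'', one shows that any $\lambda \notin \mathcal J^\infty \cup \Sigma_3^\infty$ lies in the resolvent set of every $-\Delta_{X^\infty_n}$ for $n$ large, with resolvent norms bounded uniformly in $n$, using that $R_{\mathcal T}$ is expanding off its Julia set so that such $\lambda$ is pushed into the basin of infinity and the finite-level eigenvalues are kept away from $\lambda$. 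The main obstacle is this last uniform-resolvent estimate together with the limiting argument: strong resolvent convergence of $-\Delta_{X^\infty_n}$ to $-\Delta_{\mathcal T^\infty}$ does not by itself control the spectrum, so one must upgrade the convergence (or argue directly with the self-adjoint operator and its quadratic form) and separately verify that no eigenvalues beyond $\Sigma_3^\infty$ are created in the limit — equivalently, that the only localized eigenfunctions are the ones accounted for at the exceptional value $3$ and its $R_{\mathcal T}$-preimages.
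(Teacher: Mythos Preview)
The paper does not prove this proposition at all: it is quoted verbatim as \cite[Theorem~4]{teplaev} and used as a black box to deduce Corollary~\ref{cor:eq_dim_inf_gasket}. There is therefore no ``paper's own proof'' to compare your proposal against.

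Your sketch is a reasonable outline of the strategy in Teplyaev's original paper (self-adjointness via the limiting energy form, decimation on the exhaustion $\mathcal T^n \nearrow \mathcal T^\infty$, the Koenigs linearization interpretation of $\mathcal R$, localized eigenfunctions at the exceptional value $3$, and a Weyl-sequence/resolvent argument for the two inclusions), and you have correctly identified the genuinely delicate point, namely the uniform resolvent bound needed for the ``$\subseteq$'' direction. But none of this is carried out or even sketched in the present paper; the authors are interested only in the dimension consequence and defer the proof entirely to \cite{teplaev} and \cite{sabot2}.
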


A number of generalizations of this result for other unbounded nested fractals
have been proved, see e.g. \cite{sabot,st}. The proposition immediately yields the following corollary.

\begin{corollary}\label{cor:eq_dim_inf_gasket}
We have that
 $\dim_H(\sigma(-\Delta_{\mathcal T^\infty})) = \dim_H(\mathcal J_{\mathcal T})$.
\end{corollary}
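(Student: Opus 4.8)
The plan is to deduce Corollary~\ref{cor:eq_dim_inf_gasket} directly from Proposition~\ref{prop:union_inf_gasket}, exploiting the countable-stability of Hausdorff dimension together with the fact that the maps appearing in the decomposition of $\sigma(-\Delta_{\mathcal T^\infty})$ are (locally) bi-Lipschitz, and hence dimension-preserving, away from the single critical point. Recall that $\sigma(-\Delta_{\mathcal T^\infty}) = \mathcal J^\infty \cup \Sigma_3^\infty$, where $\mathcal J^\infty = \bigcup_{n \in \Zset} 5^n \mathcal R(\mathcal J_{\mathcal T})$ and $\Sigma_3^\infty = \bigcup_{n \in \Zset} 5^n \mathcal R(\Sigma_3)$ with $\Sigma_3 = \bigcup_{n \geq 0} R^{-n}(\{3\})$ a countable set.

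The first step is to observe that $\Sigma_3$ is countable, hence $\mathcal R(\Sigma_3)$ is countable, hence $\Sigma_3^\infty$ is a countable union of countable sets and so $\dim_H(\Sigma_3^\infty) = 0$. By countable stability of Hausdorff dimension it therefore suffices to prove $\dim_H(\mathcal J^\infty) = \dim_H(\mathcal J_{\mathcal T})$. The second step reduces this, again by countable stability over $n \in \Zset$, to showing $\dim_H\bigl(5^n \mathcal R(\mathcal J_{\mathcal T})\bigr) = \dim_H(\mathcal J_{\mathcal T})$ for each fixed $n$. Scaling by $5^n$ is a similarity and so preserves Hausdorff dimension; thus the crux is to show $\dim_H(\mathcal R(\mathcal J_{\mathcal T})) = \dim_H(\mathcal J_{\mathcal T})$.

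The third and main step is to analyze the map $\mathcal R(z) = \lim_{n \to \infty} 5^n (S_{-1,\mathcal T})^n(z)$. Here $S_{-1,\mathcal T}(x) = \tfrac{5}{2} - \tfrac{1}{2}\sqrt{25 - 4x}$ is the inverse branch of $R_{\mathcal T}$ fixing $0$, with $S_{-1,\mathcal T}'(0) = \tfrac{1}{5}$; so the rescaled iterates $5^n (S_{-1,\mathcal T})^n$ converge, by a standard Koenigs-linearization argument near the attracting fixed point $0$, to a conformal (real-analytic) map $\mathcal R$ defined and injective on a neighbourhood of $0$ in $[0,5]$, with $\mathcal R'(0) = 1 \ne 0$. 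Since $0 \in \mathcal J_{\mathcal T}$ and in fact $\mathcal J_{\mathcal T}$ accumulates at $0$ (as $S_{-1,\mathcal T}(\mathcal J_{\mathcal T}) \subset \mathcal J_{\mathcal T}$ forces $0 \in \mathcal J_{\mathcal T}$), one should not apply $\mathcal R$ to all of $\mathcal J_{\mathcal T}$ at once but instead use the self-similarity $\mathcal J_{\mathcal T} = \bigcup_{\epsilon} S_{\epsilon,\mathcal T}(\mathcal J_{\mathcal T})$ repeatedly to write $\mathcal J_{\mathcal T}$ as a countable union of small pieces $S_{\epsilon_1,\mathcal T} \circ \cdots \circ S_{\epsilon_k,\mathcal T}(\mathcal J_{\mathcal T})$, each contained in a region where the appropriate rescaled branch of $\mathcal R$ is bi-Lipschitz. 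Concretely, one uses the functional equation $\mathcal R \circ S_{-1,\mathcal T} = \tfrac{1}{5}\mathcal R$ (immediate from the definition of $\mathcal R$) to transfer the bi-Lipschitz estimate from a fixed neighbourhood of $0$ out to each dyadic piece; on each such piece $\mathcal R$ is then a composition of a similarity with a bi-Lipschitz conformal map, hence preserves dimension. Applying countable stability once more gives $\dim_H(\mathcal R(\mathcal J_{\mathcal T})) = \sup_{\epsilon_1,\dots,\epsilon_k} \dim_H\bigl(\mathcal R(S_{\epsilon_1,\mathcal T}\cdots S_{\epsilon_k,\mathcal T}(\mathcal J_{\mathcal T}))\bigr) = \dim_H(\mathcal J_{\mathcal T})$, since each piece $S_{\epsilon_1,\mathcal T}\cdots S_{\epsilon_k,\mathcal T}(\mathcal J_{\mathcal T})$ has the same dimension as $\mathcal J_{\mathcal T}$ (the $S_{\epsilon,\mathcal T}$ being bi-Lipschitz on $[0,5]$, as $25 - 4x$ stays bounded away from $0$ on $\mathcal J_{\mathcal T}$ — note $5 \notin \mathcal J_{\mathcal T}$).

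The main obstacle is the third step: controlling $\mathcal R$ near the critical endpoint. Two points need care. First, one must verify that the rescaled limit $\mathcal R$ genuinely exists and is conformal with nonzero derivative at $0$ — this is where Koenigs' theorem on linearization at an attracting (non-super-attracting) fixed point is invoked; $S_{-1,\mathcal T}'(0) = 1/5 \in (0,1)$ so the hypotheses hold. Second, one must ensure $\mathcal J_{\mathcal T}$ stays away from the branch point $x = 25/4$ of the square root and, more importantly, that the dyadic decomposition genuinely exhausts $\mathcal J_{\mathcal T}$ by pieces shrinking into the linearizing neighbourhood of $0$ — this follows because the whole Julia set $\mathcal J_{\mathcal T}$ is contained in $[0,5]$ with $5$ the only preimage issue, and $\sup_{\mathcal J_{\mathcal T}} S_{-1,\mathcal T}' < 1$ uniformly, so $k$-fold compositions contract uniformly; any piece containing points near $5$ is itself the image under some $S_{\epsilon}\cdots$ of a piece deep inside the good neighbourhood. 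Once these analytic facts are in place, everything else is an exercise in countable stability of $\dim_H$ and the invariance of $\dim_H$ under bi-Lipschitz maps \cite{falconer}. The identical argument, mutatis mutandis, establishes Corollary~\ref{cor:dim_eq_lattice} for the lattice, so the two corollaries together with Theorem~\ref{thm:main}'s numerical estimate of $\dim_H(\mathcal J_{\mathcal T})$ yield the equality of all three dimensions asserted in Theorem~\ref{thm:main}.
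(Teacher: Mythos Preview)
Your argument is correct and far more detailed than the paper's, which simply records the corollary as following ``immediately'' from Proposition~\ref{prop:union_inf_gasket} without further comment. The structure you identify --- countable stability to discard $\Sigma_3^\infty$ and to reduce $\mathcal J^\infty$ to a single scaled copy of $\mathcal R(\mathcal J_{\mathcal T})$, then bi-Lipschitz invariance for $\mathcal R$ via Koenigs linearization at the attracting fixed point $0$ of $S_{-1,\mathcal T}$ --- is exactly what makes the corollary immediate.

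Two small points. First, your parenthetical claim that $5 \notin \mathcal J_{\mathcal T}$ is false: since $0$ is the fixed point of $S_{-1,\mathcal T}$ it lies in $\mathcal J_{\mathcal T}$, and then $5 = S_{+1,\mathcal T}(0) \in \mathcal J_{\mathcal T}$ as well. This does not affect your argument, because the real reason the $S_{\epsilon,\mathcal T}$ are bi-Lipschitz on $[0,5]$ is simply that $25 - 4x \geq 5 > 0$ there, so $|S_{\epsilon,\mathcal T}'(x)| = (25-4x)^{-1/2} \in [1/5, 1/\sqrt{5}]$ uniformly; no appeal to $\mathcal J_{\mathcal T}$ avoiding $5$ is needed.

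Second, the dyadic decomposition in your third step is more machinery than necessary. The functional equation $\mathcal R(z) = 5^N \mathcal R\bigl(S_{-1,\mathcal T}^N(z)\bigr)$ already extends the Koenigs map from a neighbourhood of $0$ to a real-analytic function on all of $[0,5]$: for each $z \in [0,5]$ choose $N$ with $S_{-1,\mathcal T}^N(z)$ in the linearizing neighbourhood (possible since $S_{-1,\mathcal T}$ contracts $[0,5]$ uniformly towards $0$). Differentiating gives $\mathcal R'(z) = 5^N (S_{-1,\mathcal T}^N)'(z)\,\mathcal R'\bigl(S_{-1,\mathcal T}^N(z)\bigr) > 0$, so $\mathcal R$ is a $C^1$ diffeomorphism of the compact interval $[0,5]$ onto its image and hence globally bi-Lipschitz. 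Thus $\dim_H(\mathcal R(\mathcal J_{\mathcal T})) = \dim_H(\mathcal J_{\mathcal T})$ in one stroke, with no need to split $\mathcal J_{\mathcal T}$ into cylinder pieces or to invoke $S_{+1,\mathcal T}$ at all.
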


Thus estimating the Hausdorff dimension of the spectrum
$\sigma(-\Delta_{\mathcal T^\infty})$ is again equivalent
to estimating the Hausdorff dimension of the Julia set $\mathcal J_{\mathcal T}$.


\section{Related results for other gaskets and lattices} \label{sec:related_results}
In this section we describe other examples of fractal sets to which the same approach can be applied.
In practice the computations may be more complicated, but the same basic method still applies.

\subsection{Higher-dimensional infinite Sierpi\'nski gaskets}
Let
$d \geq 2$ and
$T_i\colon \mathbb R^d \to \mathbb R^d$ be contractions defined by
$$
T_i(x_1, \ldots, x_d) = \left(
\frac{x_1}{2}, \ldots, \frac{x_d}{2}
\right) +
\frac{1}{2} e_i, \quad \hbox{ for } i = 1, \ldots, d,
$$
where $e_i$ is the $i$th coordinate vector.
The $d$-dimensional Sierpi\'nski gasket $\mathcal T^d \subset \mathbb R^d$
is the  smallest non-empty closed set such that
$$
\mathcal T^d = \bigcup_{i=1}^d T_i (\mathcal T^d).
$$

In \cite{rammal}, the analogous results are presented for the spectrum of the Laplacian
$\Delta_{\mathcal T^d}$ associated to the corresponding
Sierpi\'nski gasket $\mathcal T^d \subset \mathbb R^d$
in $d$ dimensions $(d \geq 3$).

\begin{definition}
For a sequence $(\omega_n)_{n \in \mathbb N} \subset \{1, \ldots, d\}^{\mathbb N}$ we can define an \textit{infinite Sierpi\'nski gasket in $d$ dimensions} as
$$
\mathcal T^{d, \infty} = \bigcup_{n=1}^\infty T_{\omega_1}^{-1}\circ \cdots \circ T_{\omega_n}^{-1} (\mathcal T^d).
$$
\end{definition}

As before, we can associate a Julia set ${\mathcal J}_{\mathcal T^d}$ and consider its Hausdorff dimension
$ \dim_H({\mathcal J}_{\mathcal T^d})$.
More precisely,
  in each  case, we can consider the decimation polynomial
$R_{\mathcal T^d}\colon [0,3+d] \to \mathbb R$ defined by
$$R_{\mathcal T^d}(x)= x ((3+d)-x),$$
with two local inverses $S_{\pm1, \mathcal T^d}\colon [0, 3+d] \to [0,3+d]$ given by
$$
 S_{\epsilon,\mathcal T^d} (x) = \frac{1}{2}  \left(
3+d + \epsilon \sqrt{9+6d +d^2-4x}
\right) \quad \hbox{ with } \epsilon = \pm 1.
$$
Let $ {\mathcal J}_{\mathcal T^d} \subset[0,3+d]$ be
the limit set of these two contractions, i.e.,
the smallest non-empty closed set such that 
$${\mathcal J_{\mathcal T^d}} = S_{-1, \mathcal T^d}({\mathcal J}_{\mathcal T^d}) \cup S_{+1, \mathcal T^d} ({\mathcal J}_{\mathcal T^d}).$$

\begin{theorem} \label{thm:td_dim}
The Hausdorff dimension $\dim_H(\mathcal J_{\mathcal T^d})$ of the Julia set $\mathcal J_{\mathcal T^d}$ for
$d \in \{2, \ldots, 10\}$ associated to the Sierpi\'nski gasket in $d$ dimensions
is given by the values in Table \ref{tab:dim_jtd}, accurate to the number of decimals stated.
\begin{table}[h!]
\begin{center}
\begin{tabular}{ |c|c| }
 \hline
$d$ &$ \dim_H({\mathcal J}_{\mathcal T^d})$  \\
 \hline
2 &  0.55161856837246 \ldots \\
3 &  0.45183750018171 \ldots \\
4 &  0.39795943979056 \ldots \\
5 &  0.36287714809375 \ldots \\
6 &  0.33770271892130 \ldots \\
7 &  0.31850809575800 \ldots \\
8 &  0.30324865557723 \ldots \\
9 &  0.29074069840192 \ldots \\
10 & 0.28024518050407 \ldots \\
 \hline
\end{tabular}
\end{center}
\caption{The Hausdorff dimension of ${\mathcal J}_{\mathcal T^d}$ for $2 \leq d \leq 10$.}
\label{tab:dim_jtd}
\end{table}
\end{theorem}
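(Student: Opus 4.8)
The plan is to view $\mathcal J_{\mathcal T^d}$ as the attractor (limit set) of the conformal iterated function system $\{S_{-1,\mathcal T^d}, S_{+1,\mathcal T^d}\}$ on the interval $[0,3+d]$ and to compute its Hausdorff dimension via the Bowen–Ruelle pressure equation, which in this one-dimensional real-analytic setting is amenable to rigorous, rapidly converging numerical approximation. First I would record the basic structure: both branches $S_{\epsilon,\mathcal T^d}(x) = \tfrac12\bigl((3+d) + \epsilon\sqrt{(3+d)^2 - 4x}\bigr)$ are real-analytic contractions of $[0,3+d]$ into itself, with the standard separation/open set condition holding (the two images overlap only at the common fixed point region in a controlled way, or one passes to a slightly smaller invariant interval to get genuine separation), so that $\mathcal J_{\mathcal T^d}$ is a Cantor set and its Hausdorff dimension equals its box-counting dimension. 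One must check that the maps are uniformly contracting and that the derivatives are bounded away from $0$ away from the branch point $x = 3+d$; since the critical value $3+d$ is not in the relevant part of the Julia set (the dynamics stays inside $[0,3+d)$, away from where $R_{\mathcal T^d}' $ vanishes), the system is genuinely hyperbolic and the classical Bowen formula applies: $\dim_H(\mathcal J_{\mathcal T^d})$ is the unique $s$ with $P(s) = 0$, where $P(s) = \lim_{n\to\infty}\tfrac1n\log\sum_{|\mathbf{i}|=n}\|(S_{\mathbf i})'\|_\infty^s$ (equivalently the pressure of the potential $-s\log|R_{\mathcal T^d}'|$ over the expanding map $R_{\mathcal T^d}$).

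Next I would set up the transfer-operator approach that makes this rigorous to many digits. For each $s$ one considers the Ruelle transfer operator $(\mathcal L_s h)(x) = \sum_{\epsilon=\pm1} |S_{\epsilon,\mathcal T^d}'(x)|^s\, h(S_{\epsilon,\mathcal T^d}(x))$ acting on a suitable space of holomorphic functions on a complex neighbourhood of $[0,3+d]$. Because the inverse branches are contractions that extend holomorphically to (and strictly into) such a neighbourhood, $\mathcal L_s$ is nuclear of order zero, hence trace-class with a Fredholm determinant $\det(I - z\mathcal L_s)$ that is entire in $z$; its leading zero gives $e^{-P(s)}$, so $\dim_H(\mathcal J_{\mathcal T^d})$ is the value of $s$ at which the leading eigenvalue of $\mathcal L_s$ equals $1$. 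The numerical scheme is to approximate this leading eigenvalue (or the relevant zero of the Fredholm determinant) by truncating the operator on a finite-dimensional space of polynomials or by the "periodic-orbit"/Grothendieck–Ruelle trace formula $\det(I - z\mathcal L_s) = \exp\bigl(-\sum_{n\ge1}\tfrac{z^n}{n}\operatorname{tr}\mathcal L_s^n\bigr)$, where $\operatorname{tr}\mathcal L_s^n$ is an explicit finite sum over fixed points of the $n$-fold compositions, each term a closed expression in $s$ and the derivative cocycle. Then a bisection in $s$ on the monotone function $s\mapsto(\text{leading eigenvalue of }\mathcal L_s)$ localizes the dimension.

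The rigorous bounds — the part I expect to be the main obstacle, and the reason the theorem can assert the digits are correct — come from controlling the truncation/tail error. One uses the super-exponential decay of the eigenvalues of a nuclear operator with holomorphic symbol: if the branches map a disc (or annular neighbourhood) of radius $R$ around $[0,3+d]$ strictly inside a disc of radius $\rho R$ with $\rho<1$, then the $k$-th singular value of $\mathcal L_s$ is $O(\rho^{k^2/2})$-type, so the Fredholm determinant computed from the first $N$ traces differs from the true one by an explicitly bounded amount; choosing $N$ moderately large makes this smaller than $10^{-101}$. Combined with interval arithmetic for the finitely many trace computations and for the bisection step, this yields certified upper and lower bounds for $\dim_H(\mathcal J_{\mathcal T^d})$ agreeing to the stated precision. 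The only $d$-dependence in this argument is through the explicit constants $\rho$, $R$ and the contraction rate, which one checks are uniform and favourable for $d\in\{2,\dots,10\}$; running the certified algorithm for each such $d$ produces Table \ref{tab:dim_jtd}. (The case $d=2$ of this computation is exactly the value appearing in Theorem \ref{thm:main}, via Corollaries \ref{cor:dim_eq_lattice} and \ref{cor:eq_dim_inf_gasket}.) A secondary technical point to dispatch carefully is the open set / separation condition when the two branch images abut: one either verifies the strong separation condition on an invariant subinterval, or invokes that for conformal IFS on the line the bounded distortion property suffices for the Bowen formula even under the weaker open set condition, so no dimension is lost.
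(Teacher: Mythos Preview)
Your proposal is correct in its overall framework---Bowen--Ruelle pressure, transfer operators $\mathcal L_s$, holomorphic extension of the branches, bisection in $s$---and this is indeed the skeleton the paper uses. The genuine difference lies in how the rigorous bounds are certified. You propose to control truncation error via nuclear-operator theory: the Fredholm determinant/trace formula together with super-exponential singular-value decay gives explicit a priori tail bounds, and interval arithmetic on the finitely many periodic-orbit traces finishes the job. The paper instead uses a min-max lemma (Lemma~\ref{minmax}): once one has \emph{any} positive test functions $f,g$ with $\inf \mathcal L_{t_0}f/f>1$ and $\sup \mathcal L_{t_1}g/g<1$, one gets $t_0<\dim_H<t_1$ directly. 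The test functions are produced by Lagrange--Chebyshev collocation (a finite-rank approximation of the eigenfunction), and the inequalities are verified rigorously with ball arithmetic (Arb) and adaptive interval bisection, with higher derivatives of $h=\mathcal L_t f/f$ used to tighten the enclosures. This sidesteps the nuclear/determinant machinery entirely: no explicit decay constants, no trace sums, just a posteriori verification of two inequalities. Your approach buys clean a priori error control and a direct link to the zeta/determinant picture; the paper's approach buys a more elementary certification step that is agnostic to how the test functions were obtained. A minor point: your worry about the ``branch point $x=3+d$'' is misplaced---the square-root singularity of $S_{\pm1,\mathcal T^d}$ sits at $x=(3+d)^2/4>3+d$, outside the working interval, so the branches are uniformly hyperbolic real-analytic contractions on all of $[0,3+d]$ without any excision.
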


The proof uses the same algorithmic method as that of Theorem \ref{thm:main}, see Section \ref{sec:dim_estimate}.

\begin{remark}
By arguments developed in \cite{fs} and \cite{sabot2}, one can deduce that similarly
to Proposition \ref{prop:union_inf_gasket} and Corollary \ref{cor:eq_dim_inf_gasket}, the Hausdorff dimensions of the spectrum of the appropriately defined
Laplacian on $\mathcal T^{d,\infty}$ and the Julia set $ \dim_H({\mathcal J}_{\mathcal T^d})$ coincide.
\end{remark}

We can observe empirically from the table that the dimension decreases as $d \to +\infty$.
The following simple lemma confirms that $\lim_{ d \to +\infty}  \dim_H(\mathcal J_{\mathcal T^{d}}) = 0$ with explicit bounds.

\begin{lemma} As $d \to +\infty$ we can bound
$$
\frac{\log 2}{\log (d+3)}
\leq \dim_H(\mathcal J_{\mathcal T^{d}})
\leq \frac{2 \log 2}{\log (d+3) + \log (d-1)}
.
$$
\end{lemma}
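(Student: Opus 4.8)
The plan is to obtain both bounds from the standard estimate for the dimension of a self-similar set generated by two contractions, applied to the conformal iterated function system $\{S_{-1,\mathcal T^d}, S_{+1,\mathcal T^d}\}$ on the interval $[0,3+d]$, using the fact that $\mathcal J_{\mathcal T^d}$ is its attractor. The two branches are not linear, so one cannot use Moran's equation directly; instead I would control their derivatives uniformly over the interval and sandwich the dimension between the values one would get from linear maps with the largest and smallest derivative. Writing $D = d+3$ for brevity, one has $S_{\epsilon,\mathcal T^d}(x) = \tfrac12(D + \epsilon\sqrt{D^2-4x})$, so $S_{\epsilon,\mathcal T^d}'(x) = \mp (D^2-4x)^{-1/2}$, and on $x \in [0,D]$ the quantity $D^2 - 4x$ ranges over $[D^2-4D, D^2] = [(D-4)D,\, D^2]$. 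Hence
\begin{equation}\label{eq:deriv_bounds}
\frac{1}{D} \leq |S_{\epsilon,\mathcal T^d}'(x)| \leq \frac{1}{\sqrt{D(D-4)}} \quad \text{for } x \in [0,D],\ \epsilon = \pm 1.
\end{equation}
(One should check that $\mathcal J_{\mathcal T^d}$ actually lies in a subinterval on which these bounds can be taken, but since both branches map $[0,D]$ into itself the attractor is contained in $[0,D]$ and \eqref{eq:deriv_bounds} suffices; in fact the left branch lands near $0$ and the right near $D$, so one could sharpen the interval, but it is not needed here.)

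Next I would invoke the comparison principle for dimensions of attractors of conformal IFS: if every branch has derivative of modulus in $[a,b]$ throughout a forward-invariant interval containing the attractor $K$, then
\begin{equation}\label{eq:dim_sandwich}
\frac{\log 2}{-\log a} \leq \dim_H(K) \leq \frac{\log 2}{-\log b},
\end{equation}
which follows by the usual covering argument (an $n$-fold composition of branches contracts by a factor between $a^n$ and $b^n$, there are $2^n$ such compositions, and their images cover $K$ with controlled overlap), or alternatively by bounding the pressure function and applying Bowen's formula. Feeding the bounds of \eqref{eq:deriv_bounds} into \eqref{eq:dim_sandwich} with $a = 1/D$ and $b = 1/\sqrt{D(D-4)}$ gives, after substituting $D = d+3$,
$$
\frac{\log 2}{\log(d+3)} \leq \dim_H(\mathcal J_{\mathcal T^d}) \leq \frac{\log 2}{\tfrac12\big(\log(d+3) + \log(d-1)\big)} = \frac{2\log 2}{\log(d+3) + \log(d-1)},
$$
which is exactly the claimed inequality, and the upper bound visibly tends to $0$ as $d \to \infty$, forcing $\dim_H(\mathcal J_{\mathcal T^d}) \to 0$.

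The only genuinely delicate point is the upper-bound side of the comparison principle: for the covering argument to give $\dim_H(K) \le \log 2 / (-\log b)$ one needs that the $2^n$ image intervals at level $n$ have bounded overlap (or one uses the trivial bound that a sum of $2^n$ intervals of length at most $b^n \cdot |[0,D]|$ gives an upper box-counting estimate, which already yields $\dim_B(K) \le \log 2/(-\log b)$, and then $\dim_H \le \dim_B$). Since the statement only asks for the stated inequalities and not sharpness, the crude box-counting bound is entirely sufficient and avoids any separation hypothesis, so I would present it that way; the lower bound is softer, following either from the mass distribution principle applied to the natural Bernoulli measure pushed to $K$, or again from the general conformal-IFS lower estimate, neither of which needs an open set condition because two branches with contraction ratio at most $1/\sqrt{D(D-4)} < 1/2$ automatically have enough room. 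Thus the main obstacle is essentially bookkeeping: making the derivative bounds \eqref{eq:deriv_bounds} rigorous on a precisely specified invariant interval and citing the correct form of the dimension comparison lemma.
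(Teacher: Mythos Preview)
Your proof is correct and follows essentially the same route as the paper: the paper bounds $|R_{\mathcal T^d}'|$ on the two preimage intervals $I_1,I_2 = R_{\mathcal T^d}^{-1}([0,d+3])$ by $\sqrt{(d+3)(d-1)} \le |R_{\mathcal T^d}'| \le d+3$, which is exactly equivalent (via the inverse function theorem) to your bounds \eqref{eq:deriv_bounds} on $|S_{\pm1,\mathcal T^d}'|$, and then applies the same comparison principle. The only cosmetic difference is that you work with the inverse branches while the paper works with $R_{\mathcal T^d}$ itself.
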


\begin{proof}
 We can write
 $$I_1:= R_{\mathcal T^d}^{-1}([0,3+d]) \cap
 \left[ 0, \frac{3+d}{2} \right] = \left[0, \frac{3+d}{2}
\left(1-  \sqrt{1- \frac{4}{3+d} } \right)
 \right].
 $$
 Thus for $x \in I_1$ we have bounds
 $$\sqrt{(3+d)(d-1)}\leq |R_{\mathcal T^d}'(x)| \leq 3+d. $$
 Similarly, we can define $I_2:= R_{\mathcal T^d}^{-1}([0,3+d]) \cap
 \left[\frac{3+d}{2}, 3+d \right]$ and obtain the same bounds on $|R_{\mathcal T^d}'(x)|$ for $x\in I_2$.
 In particular, we can then bound
\[
 \frac{\log 2}{\log (3+d)} \leq
 \dim_H(\mathcal J_{\mathcal T^{d}}) \leq
\frac{2 \log 2}{\log (3+d)+ \log (d-1) }.\qedhere
\]
\end{proof}


\subsection{Post-critically finite self-similar sets}
The method of spectral decimation used for the Sierpi\'nski gasket by
Fukushima and Shima \cite{fs}, was extended by Shima \cite{shima} to
post-critically finite self-similar sets and thus provided a method for analyzing the spectra of their Laplacians.

\begin{definition}
Let $\Sigma = \{1, \ldots, k\}^{\mathbb Z_+}$ be the space of (one-sided) infinite sequences  with the Tychonoff product topology, and $\sigma$ the usual left-shift map on $\Sigma$.

Let $T_1, \ldots, T_k\colon \mathbb R^d \to \mathbb R^d$ be contracting similarities and
let $\mathcal X$ be the limit set, i.e., the smallest closed subset with $\mathcal X = \bigcup_{i=1}^k T_i(\mathcal X)$.
Let $\pi\colon \Sigma \to \mathcal X$ be the natural continuous map defined by
$$
\pi\left((w_n)_{n=0}^\infty\right) = \lim_{n \to +\infty} T_{w_0} T_{w_1} \cdots T_{w_n}(0).
$$
We say that $\mathcal X$ is post-critically finite if
$$
\#
\left(
\bigcup_{n=0}^\infty \sigma^n
\left\{
(w_n) \in \Sigma : \pi(w_n) \in K
\right\}
\right) < +\infty
$$
where $K = \bigcup_{i\neq j} T_i \mathcal X \cap T_j \mathcal X$.

\end{definition}

\begin{figure}
\centerline{
\begin{tikzpicture}[scale=0.35]
\draw[thick, black] (0,0) -- (10,0)--(5,8.66)--(0,0);
\node at (5,-1.5){$X_0$};
\end{tikzpicture}
\hskip 1cm
\begin{tikzpicture}[scale=0.35]
\draw[thick, black] (0,0) -- (10,0)--(5,8.66)--(0,0);
\draw[thick, black] (3.333,0) -- (5, 2.88675)--(1.6666, 2.88675)--(3.333,0);
\draw[thick, black] (6.666,0) -- (8.333, 2.88675)--(5.0, 2.88675)--(6.666,0);
\draw[thick, black] (3.333,5.7735) -- (6.666, 5.7735)--(5, 2.88675)--(3.333,5.7735);
\node at (5,-1.5){$X_1$};
\end{tikzpicture}
\includegraphics[height=3.8cm, angle=0, trim={-5cm -4.4cm 0 1cm}]{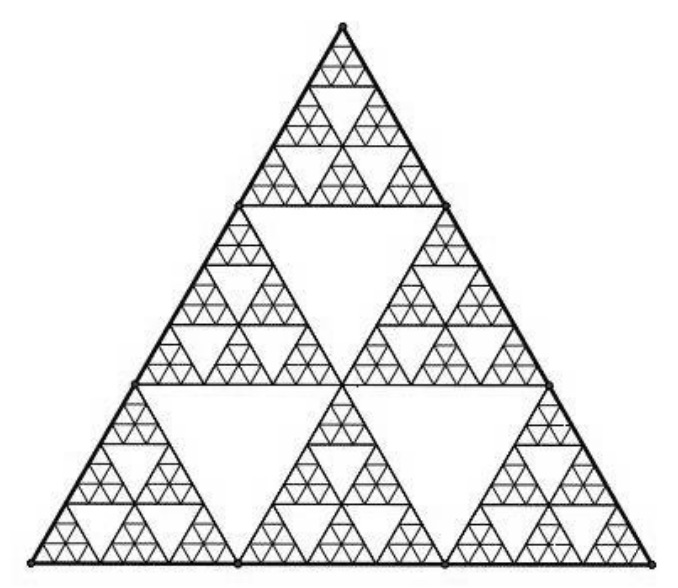}
}
\caption{The first two graphs for $SG_3$ (left, centre) and the $SG_3$ gasket (right).}
\label{fig:sg3}
\end{figure}

\begin{figure}
\centerline{
\includegraphics[height=3.5cm, angle=-0]{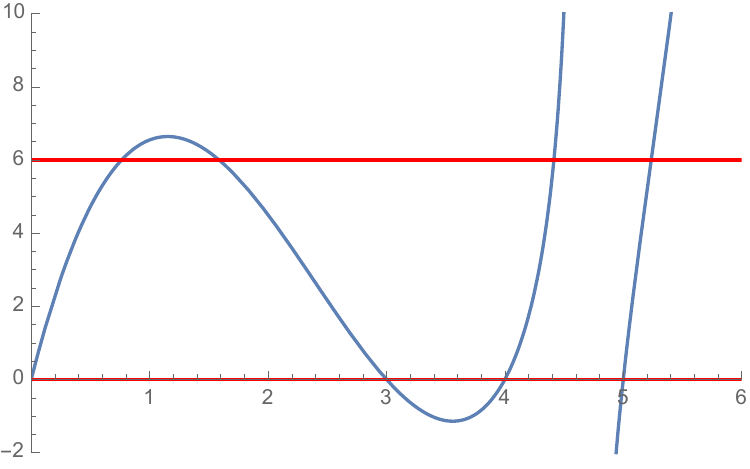}
\hskip 1cm
\includegraphics[height=3.5cm, angle=-0]{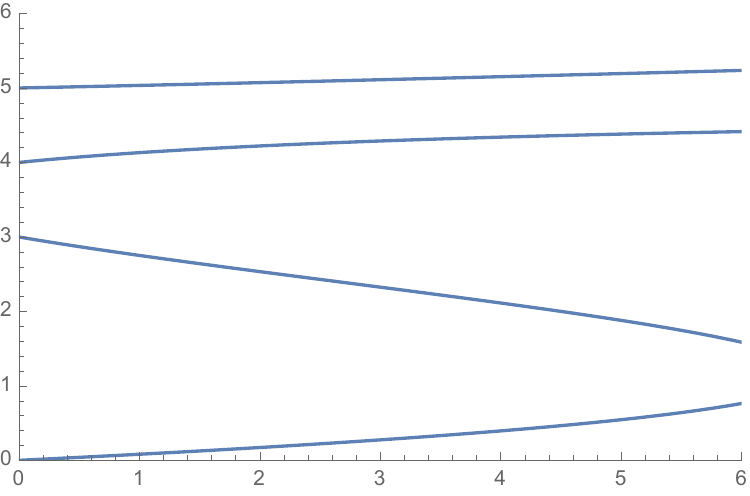}
}
\caption{The function $R_{SG_3}(x)$ and the four contracting inverse
branches for the $SG_3$ gasket.}
\label{fig:sg3_inv_branches}
\end{figure}

The original Sierpi\'nski triangle $\mathcal T$ is
an example of a limit set which is
post-critically finite.  So is the following variant on the Sierpi\'nski triangle.

\begin{example}[$SG_3$ gasket] \label{ex:sg3}
We can consider the Sierpi\'nski gasket $SG_3$ (see Figure \ref{fig:sg3}) which is the smallest
non-empty closed set $\mathcal X_{SG_3}$ such that
$
\mathcal X_{SG_3} = \bigcup_{i=1}^6 T_i \mathcal X_{SG_3}
$
where
$$
T_j (x,y) = p_j + \left( \frac{x}{3}, \frac{y}{3} \right) \quad \hbox{ for } j =1, \ldots, 6,
$$
with
\begin{align*}
p_1 = (0,0),
p_2 = \left(\frac{1}{3},0 \right),
p_3 = \left(\frac{2}{3},0 \right),
p_4 = \left(\frac{1}{6},\frac{1}{2\sqrt{3}} \right),
p_5 = \left(\frac{1}{2},\frac{1}{2\sqrt{3}} \right),
p_6 = \left(\frac{1}{3},\frac{1}{\sqrt{3}} \right).
\end{align*}

\noindent In this case we can  associate the decimation rational function $R_{SG_3}\colon [0,6] \to [0,6]$ given by
$$
R_{SG_3}(x) = \frac{3x (5-x) (4-x) (3-x)}{14-2x},
$$
for which there are four local inverses $S_{j, SG_3}$ (for $j=1,2,3,4$) \cite{ds},
see Figure \ref{fig:sg3_inv_branches}.
The associated Julia set ${\mathcal J}_{SG_3}$,
which is the smallest non-empty closed set such that
${\mathcal J}_{SG_3} = \bigcup_{j=1}^4 S_{j, SG_3} ({\mathcal J}_{SG_3})$,
has Hausdorff dimension $ \dim_H({\mathcal J}_{SG_3})$.

Using Mathematica with a sufficiently high precision setting (see Example
\ref{ex:sg3_numerics} for more details) we can numerically compute the Hausdorff dimension of the Julia set ${\mathcal J}_{SG_3}$ associated to
the Sierpi\'nski gasket $SG_3$ to be
$$
\dim_H({\mathcal J}_{SG_3}) =
0.617506301862352229042494874316407096341976 \ldots
$$
\end{example}


\begin{example}[Vicsek graph]
The Vicsek set $X_{\mathcal V}$ is the smallest non-empty closed set such that 
$
X_{\mathcal V} = \bigcup_{j=1}^5 T_j(X_{\mathcal V})
$
where 
$$
T_j(x,y) = p_j + \left(\frac{x}{3}, \frac{y}{3}\right) \quad
\hbox{ for } j=1, \ldots, 5,
$$
with
\[
p_1 = (0,0),~
p_2 = \left(\frac{2}{3},0 \right),~
p_3 = \left(\frac{2}{3},\frac{2}{3} \right),~
p_4 = \left(0,\frac{2}{3} \right),~
p_5 = \left(\frac{1}{3},\frac{1}{3} \right).
\]

In this case, studied in \cite[Example 6.3]{mt2}, 
one has that $R_{\mathcal V}\colon[-1,0 ] \to \mathbb R$ is given by
$$
R_{\mathcal V}(z) = z(6z +3) (6z+5),
$$
with three inverse branches $S_1, S_2, S_3\colon [-1,0] \to [-1,0]$ given by
$$
\begin{aligned}
S_1(x)&= \frac{1}{36} \left(i(\sqrt{3} + i) t(x) - \frac{19(1 + i \sqrt{3})}{t(x)} - 16\right),\cr
S_2(x)&= \frac{1}{36} \left(-i(\sqrt{3} - i) t(x) - \frac{19(1 - i \sqrt{3})}{t(x)} - 16\right),\cr
S_3(x)&= \frac{1}{18} \left(t(x) + \frac{19}{t(x)} - 8\right),
\end{aligned}
$$
where $t(x) = \left(9\cdot (81x^2 + 56x - 75)^{1/2} + 81x+28\right)^{1/3}$.
The associated Julia set ${\mathcal J}_{\mathcal V}$ is the smallest
non-empty closed set such that
${\mathcal J}_{\mathcal V} = \bigcup_{j=1}^3 S_{j, \mathcal V} ({\mathcal J}_{\mathcal V})$. The following theorem is proved similarly to Theorem \ref{thm:main},
as described in Section \ref{sec:dim_estimate}.

\begin{theorem} \label{thm:vicsek_dim}
The Hausdorff dimension of the Julia set ${\mathcal J}_{\mathcal V}$ is
$$
\dim_H({\mathcal J}_{\mathcal V}) = 0.49195457005266 \ldots,
$$
accurate to the number of decimals stated.
\end{theorem}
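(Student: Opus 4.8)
The plan is to view $\dim_H(\mathcal{J}_{\mathcal V})$ as the dimension of the attractor (limit set) of the conformal iterated function system $\{S_1, S_2, S_3\}$ acting on a suitable interval $J \subset [-1,0]$, and to estimate it by the same thermodynamic/transfer-operator method announced in \S\ref{sec:dim_estimate} for Theorem \ref{thm:main}. The key structural input is that the three branches $S_j$ are real-analytic contractions on a neighbourhood of $\mathcal{J}_{\mathcal V}$ with no overlaps (the open set condition holds, inherited from the fact that $R_{\mathcal V}$ is a genuine expanding map on its Julia set and the $S_j$ are the inverse branches of $R_{\mathcal V}$ restricted to a forward-invariant interval). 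Under these hypotheses Bowen's formula applies: $s := \dim_H(\mathcal{J}_{\mathcal V})$ is the unique zero of the pressure function $t \mapsto P(t)$, where $P(t)$ is the topological pressure of the potential $-t \log |R_{\mathcal V}'|$ restricted to $\mathcal{J}_{\mathcal V}$, equivalently $t \mapsto -t\log|(S_{j(\cdot)})'|$ summed over the IFS.

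First I would verify the analytic set-up: confirm that $R_{\mathcal V}(z) = z(6z+3)(6z+5)$ maps some closed interval $J \ni \mathcal{J}_{\mathcal V}$ onto itself with $|R_{\mathcal V}'| > 1$ on $J$ (a short calculus check on the cubic), so that the three real inverse branches $S_1, S_2, S_3$ — the real-analytic formulas given via $t(x)$ — are well-defined $C^\omega$ contractions $J \to J$ with disjoint (or at worst boundary-touching) images, yielding the open set condition. This justifies the identification $\mathcal{J}_{\mathcal V} = \bigcup_{j=1}^3 S_j(\mathcal{J}_{\mathcal V})$ as a self-conformal set and legitimises Bowen's formula, so that box and Hausdorff dimension coincide. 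Second, I would set up the Ruelle transfer operator $(\mathcal{L}_t h)(x) = \sum_{j=1}^{3} |S_j'(x)|^t\, h(S_j(x))$ on a space of holomorphic functions on a complex neighbourhood of $J$; by the analyticity of the $S_j$ this operator is nuclear, its leading eigenvalue $\rho(t)$ is real-analytic in $t$, and $P(t) = \log \rho(t)$. The dimension $s$ is then the unique solution of $\rho(s) = 1$.

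Third — the computational heart — I would approximate $\rho(t)$ rigorously using the determinant expansion $\det(I - z\mathcal{L}_t) = \exp\!\big(-\sum_{n\ge 1} \frac{z^n}{n} \sum_{\underline j}\, |(S_{\underline j})'(x_{\underline j})|^t / (1 - (S_{\underline j})'(x_{\underline j}))\big)$, summed over periodic orbits $x_{\underline j}$ of the branch compositions of length $n$, truncated at some finite $n$. The analyticity gives super-exponential (essentially factorial) decay of the truncation error, so a modest number of terms pins down $\rho(t)$, and hence $s$, to many digits; enclosing all intermediate quantities in interval arithmetic turns the approximation into a rigorous two-sided bound, from which the stated $14$-digit value follows. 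The main obstacle I anticipate is entirely at this last stage and is essentially bookkeeping rather than conceptual: the inverse branches $S_1, S_2$ are given through complex radicals and cube roots of $t(x)$, so one must (i) keep careful track of which complex branch of $t(x)$ and of the square root inside it produces the real values landing in $[-1,0]$, and (ii) control the derivatives $S_j'$ and the composition derivatives along periodic orbits with validated numerics despite these nested radicals — the apparent singularities of the radicands must be checked to lie outside the relevant domain so that the holomorphic framework, and thus the fast convergence, genuinely applies.
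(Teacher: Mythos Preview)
Your proposal is conceptually sound and would work: the structural reduction to Bowen's formula for the attractor of $\{S_1,S_2,S_3\}$, the transfer-operator formulation, and the use of validated numerics are all correct, and the obstacle you flag (careful branch tracking for the nested radicals in $S_1,S_2$) is real but manageable. However, the computational heart of your argument is \emph{not} the one the paper uses. You propose the periodic-orbit/Fredholm determinant route: truncate $\det(I-z\mathcal L_t)$ using periodic points of the IFS, exploit the super-exponential decay coming from nuclearity, and enclose everything in interval arithmetic. The paper instead uses a min--max certificate (its Lemma~\ref{minmax}): for candidate $t_0<t_1$ it constructs explicit positive test functions $f,g$ (obtained as approximate leading eigenfunctions of a rank-$m$ Lagrange--Chebyshev discretisation of $\mathcal L_t$) and then verifies rigorously, via ball arithmetic on an adaptive subdivision of the interval, that $\mathcal L_{t_0}f/f>1$ and $\mathcal L_{t_1}g/g<1$ pointwise; bisection on $t$ then pins down the dimension.

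Both methods are legitimate and yield rigorous enclosures. Your determinant approach is the classical Jenkinson--Pollicott machinery and is elegant, but it requires locating periodic points of compositions $S_{\underline j}$, which for the Vicsek branches (cube roots of square roots) is exactly where your anticipated branch-tracking headache bites hardest. The paper's min--max/Chebyshev method sidesteps periodic orbits entirely: one only ever evaluates $S_j$, $S_j'$ (and a couple of higher derivatives for tighter interval bounds) at Chebyshev nodes and on subintervals, never needing to solve fixed-point equations. This makes it somewhat more robust to messy inverse branches, at the cost of the extra bisection layer and the need to verify an inequality on a whole interval rather than a finite sum.
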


\end{example}

\begin{remark}
Analogously to the case of the Sierpi\'nski lattice $\mathcal L$, we
can define lattices $\mathcal L_{SG_3}$ and $\mathcal L_{\mathcal V}$
for the $SG_3$ and Vicsek sets from the previous two examples, as well
as corresponding graph Laplacians $\Delta_{\mathcal L_{SG_3}}$ and
$\Delta_{\mathcal L_{\mathcal V}}$. The Hausdorff dimensions of their spectra
can again be directly related to those of the respective
Julia sets $\mathcal J_{SG_3}$ and $\mathcal J_{\mathcal V}$.
By \cite[Theorem 5.8]{mt2},
one has that
${\mathcal J}_{SG_3} \subseteq \sigma(-\Delta_{\mathcal L_{SG_3}}) \subseteq {\mathcal J}_{SG_3} \cup \mathcal{D}_{SG_3}$
and ${\mathcal J}_{\mathcal V} \subseteq \sigma(-\Delta_{\mathcal L_{\mathcal V}}) \subseteq {\mathcal J}_{\mathcal V} \cup \mathcal{D}_{\mathcal V}$,
where $\mathcal{D}_{SG_3}$ and $\mathcal{D}_{\mathcal V}$ are countable sets.
It follows, analogously to Corollary \ref{cor:dim_eq_lattice}, that
$\dim_H(\sigma(-\Delta_{\mathcal L_{SG_3}})) = \dim_H({\mathcal J}_{SG_3})$ and
$\dim_H(\sigma(-\Delta_{\mathcal L_{\mathcal V}})) = \dim_H({\mathcal J}_{\mathcal V})$.
\end{remark}

\begin{remark}
Other examples to which the same method could be applied include 
 the modified Koch curve (see  \cite{mt},  \cite{malozemov})
 for which the associated rational function is
$
R(x) =9 x (x-1) (x - \frac{4}{3})(x - \frac{5}{3})/(x - \frac{3}{2}).
$
More  families of such examples can be found in \cite{t-can}.
\end{remark}

\begin{remark}
The spectral decimation method can also apply to some non-post-critically finite examples, such as the
 diamond fractal \cite{ki}, for which the associated polynomial is $R(x) = 2 x (2 + x)$.
  On the other hand, there  are symmetric fractal sets which do not admit spectral decimation, such as the pentagasket, as studied in \cite{adams}.
\end{remark}





\section{Dimension estimate algorithm for Theorem \ref{thm:main}}
\label{sec:dim_estimate}

This section is dedicated to finishing the proof of Theorem \ref{thm:main},
by describing an algorithm yielding estimates (with rigorous error bounds)
for the values of the Hausdorff dimension.

By the above discussion we have reduced the estimation of the
Hausdorff dimensions of $\sigma(-\Delta_{\mathcal L})$ and
$\sigma(-\Delta_{\mathcal T^\infty})$ to that of
$\dim_H({\mathcal J}_{\mathcal T})$ for the limit set ${\mathcal J}_{\mathcal T}$
associated to $S_{\pm1,\mathcal T}$ from \eqref{eq:inv_branches} (and similarly for the other examples).  Unfortunately, since the maps
$S_{\pm1, \mathcal T}$ are non-linear it is not possible to give an explicit closed form for the value $\dim_H(\sigma(-\Delta_{\mathcal T}))= \dim_H({\mathcal J}_{\mathcal T})$.
Recently developed simple methods make the numerical estimation of this value relatively easy to implement, which we summarize in the following subsections.

\subsection{A functional characterization of dimension}
Let $\mathcal B = C(I)$ be the Banach space of continuous  functions on the interval
$I = [0,5]$ with the norm $\|f\|_\infty  = \sup_{x \in I} |f(x)|$.

\begin{definition}\label{defn:op}
Let $\mathcal L_t$ (for $t \geq 0$) be the
{\it transfer operator}
defined by
$$
\mathcal L_t f(x) = |S_{-1,\mathcal T}'(x)|^t f(S_{-1,\mathcal T}(x)) + |S_{+1,\mathcal T}'(x)|^t f(S_{+1,\mathcal T}(x))
$$ where $f \in \mathcal B \hbox{ and }x\in I$, and $S_{\pm,\mathcal T}$ are as in \eqref{eq:inv_branches}.
\end{definition}

It is well known that the transfer operator $\mathcal L_t$ (for $t \geq 0$) is a well-defined positive bounded operator from $\mathcal B$ to itself. 
To make use of the results in the previous sections, we employ the following   ``min-max method'' result:

\begin{lemma}[\cite{pv}]\label{minmax}
Given  choices of $0 < t_0 < t_1 < 1$ and strictly positive continuous   functions   $f, g\colon I \to \mathbb R^+$  with
  \begin{equation}\label{eq:infsupcond}
  \inf_{x\in I}\frac{\mathcal L_{t_0} f(x)}{f(x)} > 1 \quad \hbox{ and } \quad \sup_{x\in I}\frac{\mathcal L_{t_1} g(x)}{g(x)} < 1,
  \end{equation}
 then $t_0 < \dim_H({\mathcal J}_{\mathcal T}) < t_1$.
 \end{lemma}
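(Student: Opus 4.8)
The plan is to deduce Lemma~\ref{minmax} from the standard thermodynamic formalism for the conformal iterated function system $\{S_{-1,\mathcal T}, S_{+1,\mathcal T}\}$, together with the Ruelle--Perron--Frobenius theory for the family of transfer operators $\mathcal L_t$. Recall that $\dim_H({\mathcal J}_{\mathcal T})$ is the unique zero $s$ of the pressure function $t \mapsto P(t)$, where $P(t) = \log \rho(\mathcal L_t)$ and $\rho(\mathcal L_t)$ denotes the spectral radius (equivalently, the leading eigenvalue) of $\mathcal L_t$ acting on $\mathcal B = C(I)$. This is Bowen's formula, which applies here because the maps $S_{\pm1,\mathcal T}$ are $C^1$ contractions on the interval $I = [0,5]$ with derivatives bounded away from $0$ and $1$ (one checks from \eqref{eq:inv_branches} that the images $S_{\pm1,\mathcal T}(I)$ are disjoint up to an endpoint and satisfy the open set condition), so $\{S_{\pm1,\mathcal T}\}$ is a genuine conformal IFS whose limit set is ${\mathcal J}_{\mathcal T}$. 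Monotonicity and continuity of $P$ in $t$, together with $P(0) = \log 2 > 0$ and $P(1) < 0$ (the latter since the total length of the two branch images is strictly less than $|I|$), guarantee the zero lies in $(0,1)$ and is unique.

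The key technical input is that the leading eigenvalue $\rho(\mathcal L_t)$ is squeezed by the min and max of the ratio $\mathcal L_t h / h$ for any strictly positive continuous test function $h$. Concretely, I would prove the two-sided bound
\begin{equation*}
\inf_{x \in I} \frac{\mathcal L_t h(x)}{h(x)} \;\le\; \rho(\mathcal L_t) \;\le\; \sup_{x \in I} \frac{\mathcal L_t h(x)}{h(x)}
\end{equation*}
for every $h \in \mathcal B$ with $h > 0$ and every $t \ge 0$. The right-hand inequality follows by iterating: if $\mathcal L_t h \le c\, h$ pointwise with $c = \sup \mathcal L_t h/h$, then $\mathcal L_t^n h \le c^n h$, so $\|\mathcal L_t^n\| \le c^n \|h\|_\infty / \inf h$, whence $\rho(\mathcal L_t) = \lim_n \|\mathcal L_t^n\|^{1/n} \le c$; the left-hand inequality is dual, using $\mathcal L_t h \ge c' h$ with $c' = \inf \mathcal L_t h/h$ and positivity of $\mathcal L_t$ to get $\mathcal L_t^n h \ge (c')^n h$ and hence $\rho(\mathcal L_t) \ge c'$. (Alternatively one invokes the Collatz--Wielandt characterization of the Perron eigenvalue of a positive operator, which gives exactly these inequalities; this is essentially the content of the cited \cite{pv}.)

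With this in hand the lemma is immediate. The first hypothesis in \eqref{eq:infsupcond} says $\inf_{x} \mathcal L_{t_0} f(x)/f(x) > 1$, so by the lower bound $\rho(\mathcal L_{t_0}) > 1$, i.e.\ $P(t_0) > 0$; similarly the second hypothesis gives $\rho(\mathcal L_{t_1}) < 1$, i.e.\ $P(t_1) < 0$. Since $P$ is continuous and strictly decreasing with a unique zero at $s = \dim_H({\mathcal J}_{\mathcal T})$, the sign change forces $t_0 < s < t_1$, which is the assertion.

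I expect the main obstacle to be a careful verification that the abstract conformal-IFS machinery (Bowen's formula, analyticity and strict monotonicity of the pressure, the RPF theorem giving a simple leading eigenvalue with positive eigenfunction) genuinely applies to $\{S_{\pm1,\mathcal T}\}$: one must confirm the bounded distortion property — the derivatives $|S'_{\pm1,\mathcal T}(x)| = (25-4x)^{-1/2}$ are smooth and bounded away from $0$ and $\infty$ on $[0,5)$, but there is a mild degeneracy at the endpoint $x = 5$ (where $R_{\mathcal T}'$ vanishes), so one should either restrict to a slightly smaller invariant subinterval containing ${\mathcal J}_{\mathcal T}$, or note that ${\mathcal J}_{\mathcal T}$ is bounded away from $5$ and work on $[0, 5-\delta]$. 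Everything else — the monotonicity of $\mathcal L_t$ in $t$, the eigenvalue-squeezing inequalities, the sign analysis — is routine once the framework is set up. Since the paper cites \cite{pv} for precisely this min-max principle, in practice I would state the eigenvalue-squeezing bound as the crux, attribute it to \cite{pv}, and spend the remaining effort only on checking the conformal-IFS hypotheses and the endpoint issue.
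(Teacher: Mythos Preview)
Your proposal is correct and follows essentially the same route as the paper: both arguments (i) invoke the standard facts that $\mathcal L_t$ has a simple maximal eigenvalue $e^{P(t)}$, that $P$ is smooth (analytic/convex in the paper, continuous and strictly decreasing in yours), and that $\dim_H(\mathcal J_{\mathcal T})$ is the unique zero of $P$; and (ii) turn the pointwise hypotheses \eqref{eq:infsupcond} into the sign information $P(t_0)>0$, $P(t_1)<0$ by iterating $\mathcal L_t^n h \ge (c')^n h$ and $\mathcal L_t^n h \le c^n h$ and passing to the spectral-radius limit $\lim_n n^{-1}\log\|\mathcal L_t^n h\|_\infty$. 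The paper's write-up is simply terser, citing \cite{ruelle-book,pp,bowen,ruelle-etds} for (i). One small remark: your worry about an endpoint degeneracy at $x=5$ is unfounded, since $|S'_{\pm1,\mathcal T}(x)| = (25-4x)^{-1/2}$ is bounded between $1/5$ and $1/\sqrt{5}$ on all of $[0,5]$ (and $R_{\mathcal T}'(5)=-5\neq 0$), so no restriction to a subinterval is needed.
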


 \begin{proof} We briefly recall the proof. We require the following standard properties.
 \begin{enumerate}
 \item
For any $t\geq 0$ the operator 
 $\mathcal L_t$ has a maximal   positive simple eigenvalue $e^{P(t)}$ (with positive eigenfunction), where $P$ is the pressure function \cite{ruelle-book,pp}.
\item
$P\colon \mathbb R^+ \to \mathbb R$ is real analytic and convex \cite{ruelle-book}.
\item
The value
$t = \dim(\mathcal J_{\mathcal T})$  is the unique solution to
$P(t) = 0$, see \cite{bowen,ruelle-etds}.
\end{enumerate}
By property 1. and the first inequality in \eqref{eq:infsupcond}
we can deduce that 
\begin{equation}\label{eq:pt0}
P(t_0) = \lim_{n \to +\infty} \frac{1}{n}\log \|\mathcal L_{t_0}^nf\|_\infty > 0.
\end{equation}
By property 1. and the second inequality in \eqref{eq:infsupcond}
we can deduce that 
\begin{equation}\label{eq:pt1}
P(t_1) = \lim_{n \to +\infty} \frac{1}{n}\log \|\mathcal L_{t_1}^ng\|_\infty < 0.
\end{equation}
Comparing properties 2. and 3. with \eqref{eq:pt0} and \eqref{eq:pt1}, the result follows.
 \end{proof}

 \subsection{Rigorous verification of minmax inequalities}
Next, we explain how we rigorously verify the conditions of Lemma~\ref{minmax} for
a function $f\colon I \to \mathbb{R}^+$, that is,
\begin{enumerate}
\item $f>0$,
\item $\inf_{x\in I} h(x)> 1$ or $\sup_{x\in I} h(x) < 1$
for $h(x) := (\mathcal{L}_t f)(x) / f(x)$.
\end{enumerate}
In order to obtain rigorous results we make use of the arbitrary
precision ball arithmetic library Arb \cite{arb},
which for a given interval $[c-r, c+r]$ and function $f$ outputs
an interval $[c'-r', c'+r']$ such that $f([c-r, c+r]) \subseteq [c'+r', c'-r']$
is guaranteed. Clearly, the smaller the size of the input interval,
the tighter the bounds on its image.
Thus, in order to verify the above conditions we partition the interval
$I$ adaptively using a bisection method up to depth $k \in \mathbb{N}_0$
into at most $2^{k}$ subintervals, and check these conditions on each subinterval.
While the first condition is often immediately satisfied
for chosen test functions $f$ on the whole interval $I$,
the second condition is much harder to check as $h$ is very close to $1$
and would require very large depth $k$.

To counteract the exponential growth of the number of required subintervals,
we use tighter bounds on the image of $h$. Clearly for $x \in [c-r, c+r]$ with $c \in \mathbb R$ and $r > 0$,
we have that $|h(x) - h(c)| \leq \sup_{y\in [c-r,c+r]} | h'(y)| r$ by the mean value theorem.
More generally, we obtain for $p \in \mathbb N$ that
$$|h(x) - h(c)| \leq
\sum_{i=1}^{p-1} | h^{(i)}(c)| r^i +
\sup_{y\in [c-r,c+r]} | h^{(p)}(y)| r^p.$$
This allows to achieve substantially tighter bounds on $h([c-r, c+r])$ while
using a moderate number of subintervals, at the cost of additionally computing the first $p$ derivatives of $h$.

\subsection{Choice of \texorpdfstring{$f$}{f} and \texorpdfstring{$g$}{g} via an interpolation method}

Here we explain how to choose suitable functions $f$ and $g$ for use
in Lemma \ref{minmax}, so that given candidate values $t_0 < t_1$   we
can confirm that $t_0  < \dim_H({\mathcal J}_{\mathcal T}) < t_1$.
Clearly, if $f$ and $g$ are eigenfunctions of $\mathcal{L}_{t_0}$ and $\mathcal{L}_{t_1}$
for the eigenvalues $\lambda_{t_0}$ and $\lambda_{t_1}$, respectively, then condition \eqref{minmax}
is easy to check. As these eigenfunctions are not known explicitly, we will use the
Lagrange-Chebyshev interpolation method to approximate the respective transfer
operators by finite-rank operators of rank $m$, and thus obtain approximations $f^{(m)}$ and $g^{(m)}$
of $f$ and $g$. As the maps $S_{\pm 1, \mathcal{T}}$ involved in the definition of the transfer operator
(Definition \ref{defn:op}) extend to holomorphic functions on suitable ellipses,
Theorem 3.3 and Corollary 3 of \cite{BJ} guarantee that
the (generalized) eigenfunctions of the finite-rank operator converge (in supremum norm) exponentialy fast in $m$ to those of the transfer operator. In particular, for large enough $m$ the functions
$f^{(m)}$ and $g^{(m)}$
are positive on the interval $I$ and are good candidates for Lemma~\ref{minmax}.

\smallskip
\noindent 
{\bf Initial choice of $m$.} We first make an initial choice of $m \geq 1$.
Let $\ell_n\colon I \to \mathbb R$ (for $n=0, \ldots, m-1$) denote the Lagrange polynomials scaled to  $[0,5]$ and let
$x_n \in [0,5]$  (for $n=0, \ldots, m-1$)  denote  the associated Chebyshev points.

\smallskip
\noindent 
{\bf Initial construction of test functions.} 
Let  $v^t = (v_i^{t})_{i=0}^{m-1}$ be the left eigenvector for the maximal eigenvalue of the $m \times m$
matrix\footnote{A fast practical implementation of this requires a
slight variation \cite[Algorithm 1]{BJ}, which can be implemented
using a discrete cosine transform.}  $M_{t}(i,j) =  (\mathcal L_t \ell_i) (x_j)$ (for $0 \leq i,j \leq m-1$) and set
$$
f^{(m)} := \sum_{i=0}^{m-1} v_i^{t_0} \ell_i \quad
\hbox{ and } \quad
g^{(m)} := \sum_{i=0}^{m-1} v_i^{t_1} \ell_i.
$$
If the choices $f = f^{(m)}$ and  $g = g^{(m)}$ satisfy the hypotheses of Lemma~\ref{minmax} (which can be checked rigorously with the method in the previous section) then
we proceed to the next step. If they do not,
 we increase $m$ and try again.

\smallskip
\noindent 
{\bf Conclusion.}   
When the hypothesis of Lemma \ref{minmax} holds then its assertion confirms that $t_0  < \dim_H(\mathcal J_{\mathcal T}) < t_1$.

\smallskip
 It remains to iteratively make the best possible choices of $t_0 < t_1$ using the following approach.

 \subsection{The bisection method}
 Fix $\epsilon > 0$. We can combine the above method of choosing $f$ and $g$ with a bisection method to  improve given lower and upper bounds $t_0$ and $t_1$
 until the latter are  $\epsilon$-close:

\smallskip
\noindent 
{\bf Initial choice.} First we can set $t_0^{(1)} = 0$ and $t_1^{(1)}=1$,  for which 
$t_0^{(1)}  < \dim_H(\mathcal J_{\mathcal T}) < t_1^{(1)}$ is trivially true.

\smallskip
\noindent 
{\bf Iterative step.}
Given 
$n \geq 0$ we assume that we have chosen
$t_0^{(n)}< t_1^{(n)}$.  We can then set $T=(t_0^{(n)}+t_1^{(n)})/2$ and
proceed as follows.
\begin{enumerate}
\item[(i)]
 If 
 $t_0^{(n)}  < \dim_H(\mathcal J_{\mathcal T}) <T$
  then set $t_0^{(n+1)} =t_0^{(n)}$ and $t_1^{(n+1)} =T$.
 \item[(ii)] If
$T  < \dim_H(\mathcal J_{\mathcal T}) <t_1^{(n)}$ then set $t_0^{(n+1)} =T$ and $t_1^{(n+1)} =t_1^{(n)}$.
 \item[(iii)]
  If $ \dim_H(\mathcal J_{\mathcal T}) = T$ then we have the final value.\footnote{In  practical implementation, the case (iii) is of no relevance, and the only
  meaningful termination condition is given by $t_1 - t_0 < \epsilon$.}
\end{enumerate}

\smallskip
\noindent 
{\bf Final choice.} Once we arrive at 
 $t_1^{(n)} - t_0^{(n)} < \epsilon$ then we can  set $t_0 = t_0^{(n)}$ and  $t_1 = t_1^{(n)}$ as the resulting upper and lower bounds for the true value of
 $\dim_H(\mathcal J_{\mathcal T})$.

\bigskip
Applying this algorithm yields the proof of Theorem \ref{thm:main} (and with the
obvious modifications also those of Theorems \ref{thm:td_dim} and \ref{thm:vicsek_dim}). Specifically,
we computed the value of $\dim_H(\mathcal J_{\mathcal T})$
efficiently to the $14$ decimal places as stated with the above method,
by setting $\epsilon = 10^{-15}$, using finite-rank approximation up to rank $m=30$,
running interval bisections for rigorous minmax inequality verification
up to depth $k=18$, i.e. using up to $2^{18}$ subintervals, and using $p=2$ derivatives.
There are of course many ways to improve accuracy further, e.g., with more computation or the use of higher derivatives.

\begin{example}[Sierpi\'nski triangle]
To cheaply obtain a more accurate estimate (albeit without the rigorous guarantee resulting from the use of ball arithmetic), we use the \textsc{MaxValue} routine from Mathematica. To get an estimate on $\dim_H({\mathcal J}_{\mathcal T})$ to $60$ decimal places, we work with $100$ decimal places as Mathematica's precision setting. Taking $m=60$ we use the bisection  method and starting from
an interval $[0.2,0.8]$ after $199$ iterations we have upper and lower bounds $t_0 \leq \dim_H({\mathcal J}_{\mathcal T}) \leq t_1$, where
$$
\begin{aligned}
t_0 =&0.5516185683724609316975708723135206545360797417440422\cr
    &\qquad 0826629\color{gray}{66000504800341581203344828264869391054705} \cr
 \end{aligned}
$$
and 
$$
\begin{aligned}
t_1 =&0.5516185683724609316975708723135206545360797417440422\cr
&\qquad 0826629\color{gray}{80935741467208321300490581993941689232122}. \cr
\end{aligned}
$$

With a little more computational effort ($200$ decimals of precision, $m = 100$, $329$ iterations) we can improve the estimate to $100$ decimal places:
$$
\begin{aligned}
t_0 =&0.55161856837246093169757087608456543417211766450713\cr
       &\quad 8868116831699168666814224190486583439508658139692\color{gray}{4}\cr
       &\quad \color{gray}{80473399364569014861603996382396316337795734913712}\cr
       &\quad \color{gray}{92389795501216939500532891268573684698907908711334}\cr
     \end{aligned}
$$
and
$$
\begin{aligned}
t_1 =&0.55161856837246093169757087608456543417211766450713\cr
       &\quad 8868116831699168666814224190486583439508658139692\color{gray}{6}\cr
       &\quad \color{gray}{63351381969733012016129364111250869850101334085360}\cr
       &\quad \color{gray}{70969237514708581622707399079704491867257671463809},
\end{aligned}
$$
which yields the estimate:
$$
\begin{aligned}
 &\dim_H(\mathcal{J}_{\mathcal T}) = 0.5516185683
7246093169
7570876084
5654341721
176
\cr
&\quad
6450718
86811683169916866681
4224190486
5834395086
58139692 \ldots.
\end{aligned}
$$

\end{example}

We next consider as a second example, $SG_3$, see Example \ref{ex:sg3}.

\begin{example}[$SG_3$ gasket] \label{ex:sg3_numerics}
With the same method as in the previous example, we estimate bounds on
$\dim_H({\mathcal J}_{SG_3})$ to $60$ decimal places:
$$
\begin{aligned}
t_0 =&
0.6175063018623522290424948743164070963419768663609616\cr
&\qquad 0395161\color{gray}{40619156598666691050499356772905041875773}\cr
 \end{aligned}
$$
and
$$
\begin{aligned}
t_1 =&
0.6175063018623522290424948743164070963419768663609616\cr
&\qquad 0395161\color{gray}{51934758805391761943498290334758478481658},\cr
\end{aligned}
$$
which yields the estimate:
$$
\begin{aligned}
\dim_H({\mathcal J}_{SG_3}) &=
0.61750
63018
62352
22904
24948
74316
\cr
&\qquad
40709
63419
76866
36096
16039
5161
 \ldots
\cr
\end{aligned}
$$
\end{example}

\begin{remark}
A significant contribution to  the time complexity of the algorithm is that  of estimating the top eigenvalue and corresponding eigenvector of an $m \times m$ matrix which is $O(n \cdot m^2)$
with $n$ denoting the number of steps of the power iteration method. Moreover, by perturbation theory one might expect that in order to get an error in the eigenvector of $\epsilon > 0$ one needs to choose $m = O(\log(1/\epsilon))$
and $n = O(\log(1/\epsilon))$. 
\end{remark}

\section{Conclusion}

In this note, we have leveraged the existing theory on Laplacians associated
to Sierpi\'nski lattices, infinite Sierpi\'nski gaskets and other post-critically
finite self-similar sets, in order to establish the Hausdorff dimensions of their respective spectra.
We used the insight that, by virtue of the iterative description of these spectra,
these dimensions coincide with those of the Julia sets of certain rational functions.
Since the contractive local inverse branches of these functions are non-linear,
the values of the Hausdorff dimensions are not available in an explicit closed form,
in contrast to the dimensions of the (infinite) Sierpi\'nski
gaskets themselves, or other self-similar fractals constructing using contracting similarities and satisfying
an open set condition.
Therefore we use the fact that the Hausdorff dimension can
be expressed implicitly as the unique zero of a so-called pressure function, which itself corresponds to the maximal positive
simple eigenvalue of a family of positive transfer operators. Using a min-max method combined with the Lagrange-Chebyshev
interpolation scheme we can rigorously estimate the leading eigenvalues for every operator in this family.
Combined with a bisection method we then accurately and efficiently estimate the zeros of the respective pressure functions, yielding
rigorous and effective bounds on the Hausdorff dimensions of the spectra of the relevant Laplacians.

\Addresses

\end{document}